\documentclass{amsart}

\theoremstyle{plain}
\newtheorem{theorem}{Theorem}
\newtheorem{proposition}
{Proposition}
\newtheorem{lemma}
{Lemma}

\theoremstyle{definition}

\theoremstyle{remark}

\usepackage{cite}
\usepackage{amssymb}
\usepackage{amsthm}
\usepackage{amsmath}

\begin{document}

\title
[On positiveness and contractiveness of the integral operator]
{On positiveness and contractiveness of the integral operator 
arising from the beam deflection problem on elastic foundation}

\author{Sung Woo Choi}
\address{Department of Mathematics\\
Duksung Women's University\\
Seoul 132-714, Korea}
\email{swchoi@duksung.ac.kr}

\subjclass[2010]{Primary 34L15, 47G10, 74K10}
\keywords{beam, deflection, elastic foundation, integral operator,
eigenvalue, $L^2$-norm}

\begin{abstract}
We provide a complete proof
that there are no nontrivial eigenvalues
of the integral operator $\mathcal{K}_l$
outside the interval $(0,1/k)$.
$\mathcal{K}_l$ arises naturally
from the deflection problem of a beam
with length $l$ resting horizontally on an elastic foundation
with spring constant $k$, 
while some vertical load is applied to the beam.
\end{abstract}

\maketitle

\section{Introduction}

We consider the vertical deflection $u(x)$ 
of a linear-shaped beam with length $l > 0$ resting horizontally
on an elastic foundation.
The beam is subject to the downward load distribution
$w(x)$ applied vertically on the beam.
The given elastic foundation follows Hooke's law
with spring constant $k > 0$,
so that $k \cdot u(x)$ is the spring force distribution 
by the elastic foundation.
Let the constants $E$ and $I$ be the Young's modulus 
and the mass moment of inertia of the beam respectively, 
so that $EI$ is the flexural rigidity of the beam.
According to the classical Euler beam theory,
the resulting deflection $u(x)$ is a solution
of the following fourth-order linear ODE:
\begin{equation}
\label{equation_linear}
E I \, \frac{d^4 u(x)}{dx^4} + k \cdot u(x) = w(x).
\end{equation}

The beam deflection problem described above has been 
one of the cornerstones
of mechanical engineering~\cite{Alvesetal2009,Beaufait1980,
Galewski2011,Grossinhoetal2011,Hetenyi1946,Kuoetal1994,Mirandaetal1966,
Timoshenko1926,Timoshenko1955,Ting1982}.
In fact, when the length of the beam is infinite, 
\eqref{equation_linear}
with the boundary condition
$
\lim_{x \to \pm \infty} u(x)
 =
\lim_{x \to \pm \infty} u^\prime(x)
 =
0
$
has the following closed form solution~\cite{Greenberg}:
\[
u(x)
 =
\int_{-\infty}^\infty
K\left( \left| x - \xi \right| \right) w(\xi) \, d\xi.
\]
Here, the kernel function $K(\cdot)$ is
\[
K(y)
 :=
\frac{\alpha}{2k} \exp\left( -\frac{\alpha}{\sqrt{2}} y \right)
\sin\left( \frac{\alpha}{\sqrt{2}} y + \frac{\pi}{4} \right),
\]
where
$\alpha
 :=
\sqrt[4]{k/(EI)}$.
By analyzing the integral operator
$\mathcal{K}$ defined by 
\[
\mathcal{K}[u](x)
 :=
\int_{-\infty}^\infty K\left( |x - \xi| \right) u(\xi) \, d\xi,
\]
Choi et al.~\cite{ChoiJang}
obtained an existence and uniqueness result
for the solution
of the following nonlinear and nonuniform generalization of
\eqref{equation_linear} for infinitely long beam:
\begin{equation}
\label{equation_nonlinear}
E I \, \frac{d^4 u(x)}{dx^4} + \phi\left( u(x),x \right) = w(x).
\end{equation}

To deal with the more practical problem
of the nonlinear and nonuniform beam deflection
with a finite length $l > 0$,
it is important to analyze the integral operator
$\mathcal{K}_l$ defined by
\[
\mathcal{K}_l[u](x)
 :=
\int_{-l}^l K\left( |x - \xi| \right) u(\xi) \, d\xi.
\]
Recently,
Choi~\cite{ChoiI,ChoiII} performed analysis
on the eigenstructure of $\mathcal{K}_l$
as a linear operator on the Hilbert space $L^2[-l,l]$
of the square-integrable complex functions on
$[-l,l]$.

\begin{proposition}[\cite{ChoiII}]
The eigenvalues of $\mathcal{K}_l$ inside the real interval
$(0,1/k)$ are
$\mu_1/k > \nu_1/k > \mu_2/k > \nu_2/k > \cdots \searrow 0$,
and
$\mu_n \sim \nu_n \sim n^{-4}$
as $n \to \infty$.
\end{proposition}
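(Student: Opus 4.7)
The plan is to convert the integral eigenvalue equation into a fourth-order ODE boundary-value problem and then analyze the resulting characteristic equations. Since the kernel $K(|x-\xi|)$ is real, symmetric, and continuous, $\mathcal{K}_l$ is a compact self-adjoint operator on $L^2[-l,l]$, and a short positivity argument (using that $K(|x-\xi|)$ is the free-space Green's function of the positive operator $EI\,\partial_x^4 + k$) shows the nonzero spectrum lies in $(0,\infty)$. The explicit form of $K$ yields the Green's function identity $(EI\,\partial_x^4 + k)\,K(|x-\xi|) = \delta(x-\xi)$, which I would verify by direct computation, noting $K^{(4)}(y) + (k/EI)K(y) = 0$ for $y > 0$ and $[K''']_{0^-}^{0^+} = 1/EI$. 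Applying this operator to $\lambda u = \mathcal{K}_l[u]$ converts the eigenequation into $u^{(4)} = \gamma^4 u$ on $(-l,l)$ with $\gamma := \bigl((1-k\lambda)/(\lambda EI)\bigr)^{1/4} > 0$, whose solutions are spanned by $\{\cos\gamma x,\sin\gamma x,\cosh\gamma x,\sinh\gamma x\}$. Note $\lambda = 1/(k + EI\gamma^4)$ is strictly decreasing in $\gamma$, so the eigenvalues automatically sit in $(0,1/k)$.

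Next I would extract the four boundary conditions. The function $v(x) := \mathcal{K}_l[u](x)$ extends to all of $\mathbb{R}$ as a $C^3$ function; on $|x| > l$ the Green's function identity gives $v^{(4)} + (k/EI)v = 0$, and the kernel's exponential decay forces $v(\pm\infty) = 0$. The exterior equation has characteristic roots $\alpha e^{\pm i\pi/4}$ and $-\alpha e^{\pm i\pi/4}$ (with $\alpha = (k/EI)^{1/4}$), of which only two decay at $+\infty$, namely $e^{-\beta(x-l)}\cos\beta(x-l)$ and $e^{-\beta(x-l)}\sin\beta(x-l)$ with $\beta = \alpha/\sqrt{2}$. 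Matching $v,v',v'',v'''$ at $x = l$ to this two-parameter decaying family imposes two linear conditions on the interior data $(u(l),u'(l),u''(l),u'''(l))$, and analogously at $x = -l$; these are the sought boundary conditions. Since the kernel is invariant under $(x,\xi)\mapsto(-x,-\xi)$, $\mathcal{K}_l$ commutes with parity, so the eigenspace splits as even$\oplus$odd. In each parity class the solution reduces to two parameters ($A\cos\gamma x + C\cosh\gamma x$, respectively $B\sin\gamma x + D\sinh\gamma x$) and the boundary conditions at $-l$ are automatic, so the conditions at $l$ yield $2\times 2$ linear systems whose determinants are transcendental equations $F_e(\gamma) = 0$ and $F_o(\gamma) = 0$.

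The remainder of the proof is an analysis of $F_e$ and $F_o$. For the large-$\gamma$ asymptotics, I would pull out the dominant $\cosh\gamma l,\sinh\gamma l$ factors from each determinant; after dividing, the remaining hyperbolic terms differ from $1$ by exponentially small corrections, so $F_e(\gamma) = 0$ and $F_o(\gamma) = 0$ reduce to equations of the form $\tan(\gamma l + \varphi_e) = O(e^{-2\gamma l})$ and $\tan(\gamma l + \varphi_o) = O(e^{-2\gamma l})$ for explicit constant phases $\varphi_e,\varphi_o$. This gives a sequence of roots $\gamma_n^{(e)},\gamma_n^{(o)} \sim n\pi/l$, and translating back via $\lambda = 1/(k + EI\gamma^4)$ yields the asymptotic $\mu_n \sim \nu_n \sim (l/(n\pi))^4/EI \sim n^{-4}$ as claimed.

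The main obstacle I expect is the strict interlacing $\mu_1 > \nu_1 > \mu_2 > \nu_2 > \cdots$. Existence of infinitely many roots and the quartic-power asymptotics are essentially routine from the asymptotic reduction above and the intermediate value theorem, but interlacing is not automatic for fourth-order operators: there is no immediate Sturm oscillation theorem to invoke. I would attack this by combining a sign analysis of $F_e$ and $F_o$ on successive intervals $(n\pi/l,(n+1)\pi/l)$ with a min-max comparison: the odd subspace is the codimension-one subspace of even-extended functions vanishing at $x=0$, and a judicious interlacing/Courant-type argument on $\mathcal{K}_l$ restricted to each parity class should force the $n$th odd eigenvalue strictly between the $n$th and $(n+1)$st even eigenvalues. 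Making this rigorous, probably by explicitly checking $F_e$ and $F_o$ have opposite signs at each other's roots in the asymptotic regime and then verifying finitely many low-lying roots by a continuity/monotonicity argument (for example in the dimensionless parameter $\alpha l$), is the technical crux of the proposition.
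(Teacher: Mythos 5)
This proposition is quoted from the companion preprint \cite{ChoiII}; the paper you are reading gives no proof of it, so there is nothing in this source to compare your argument against. What can be said is how far your sketch actually gets.

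Your reduction is the standard and correct one: verifying $(EI\,\partial_x^4 + k)K(|x-\xi|)=\delta(x-\xi)$ via $K\in C^2$ with $[K''']_{0^-}^{0^+}=1/(EI)$, passing from $\lambda u = \mathcal{K}_l[u]$ to $u^{(4)}=\gamma^4 u$ on $(-l,l)$ with $\gamma^4=(1-k\lambda)/(\lambda EI)$, counting two boundary conditions at each endpoint from matching the $C^3$ extension $v=\mathcal{K}_l[u]$ to the two-dimensional space of decaying exterior solutions, and splitting by parity. The positivity remark and the monotone map $\gamma\mapsto\lambda=1/(k+EI\gamma^4)$ placing the spectrum in $(0,1/k)$ are also fine. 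The large-$\gamma$ analysis (factor out the hyperbolic growth, land on $\tan(\gamma l + \varphi)=O(e^{-2\gamma l})$, read off $\gamma_n\sim n\pi/l$ and hence $\lambda_n\sim\text{const}\cdot n^{-4}$) is the right mechanism for the $n^{-4}$ asymptotic, though to obtain it cleanly you should write the determinants out explicitly rather than leaving $\varphi_e,\varphi_o$ unnamed. One small point you gloss over: you establish only that every eigenfunction of $\mathcal{K}_l$ solves the ODE plus boundary conditions; for the asymptotics and the claim that the list is exhaustive you should also note that the forward implication alone suffices, since within each parity class the ODE has a two-dimensional solution space and hence the $2\times2$ determinant condition controls all possible eigenvalues.

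The genuine gap is exactly where you flag it: the strict interlacing $\mu_1>\nu_1>\mu_2>\nu_2>\cdots$. Your sketch does not supply an argument, and the Courant-type idea you float does not apply as stated. Courant/Weyl interlacing compares an operator with its compression to a codimension-one subspace; here the even and odd subspaces are complementary orthogonal summands of $L^2[-l,l]$, so $\mathcal{K}_l$ is a direct sum $\mathcal{K}_l|_E\oplus\mathcal{K}_l|_O$, and there is no a priori relation between the two spectra from min-max alone. To make interlacing rigorous you would in fact have to do what you describe in the last clause --- analyze the sign pattern of $F_e$ and $F_o$ on the real $\gamma$-axis, show each has exactly one simple zero in each window and that the zeros of one separate those of the other, first asymptotically and then for the finitely many low-lying roots --- but that work is not done in the proposal, and it is the substantive content of the interlacing claim. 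As written, your proposal establishes the positivity, the $(0,1/k)$ containment, and (modulo writing out the determinants) the $n^{-4}$ asymptotic, but not the strict interlacing that is the distinctive part of the proposition.
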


Since the operator $\mathcal{K}_l$ is self-adjoint,
all of its eigenvalues are real.
Note that $0$ is the trivial eigenvalue.
In fact, it is shown in \cite{ChoiI} that
$0$ is the only eigenfunction
corresponding to the trivial eigenvalue $0$,
and $1/k$ is not
an eigenvalue of $\mathcal{K}_l$.
About the eigenvalues of $\mathcal{K}_l$ in
$(-\infty,0) \cup (1/k,\infty)$,
they obtained a characteristic equation
in terms of specific functions
$\psi_L(\kappa)$ and $q(\kappa)$
defined in Section~\ref{section_preliminaries}.

\begin{proposition}[\cite{ChoiI}]
\label{proposition_characteristic}
$\lambda \in (-\infty,0) \cup (1/k,\infty)$ is an
eigenvalue of $\mathcal{K}_l$,
if and only if $\psi_L(\kappa) = q(\kappa)$,
where $\kappa = \sqrt[4]{1 - 1/(\lambda k)} > 0$ 
and $L = 2\sqrt{2}l\alpha$.
\end{proposition}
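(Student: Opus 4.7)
The plan is to convert the integral eigenvalue equation $\mathcal{K}_l[u] = \lambda u$ into a boundary value problem for a fourth-order linear ODE on $(-l, l)$ and to identify $\psi_L(\kappa) = q(\kappa)$ as the vanishing of the determinant of the resulting $4 \times 4$ linear system.

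First, since $K(|x-\xi|)$ is (a constant multiple of) the Green's function of the operator $EI\,\partial_x^4 + k$ on $\mathbb{R}$, if $u \in L^2[-l,l]$ is an eigenfunction with eigenvalue $\lambda$, then on $(-l,l)$ the function $u$ is smooth and satisfies $EI\,\lambda\,u^{(4)} + k\lambda\,u = u$. Using $\alpha^4 = k/(EI)$ and the definition $\kappa^4 = 1 - 1/(\lambda k)$, this rearranges to
\[
u^{(4)} + (\alpha\kappa)^4 u = 0 \quad \text{on } (-l,l).
\]
For $\lambda \in (-\infty, 0) \cup (1/k, \infty)$ one has $\kappa^4 > 0$, so $\kappa > 0$ is real and the solution space is four-dimensional, spanned by $e^{\pm\alpha\kappa x/\sqrt{2}}\cos(\alpha\kappa x/\sqrt{2})$ and $e^{\pm\alpha\kappa x/\sqrt{2}}\sin(\alpha\kappa x/\sqrt{2})$.

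Next I would derive boundary conditions at $x = \pm l$. Setting $\tilde v(x) := \mathcal{K}_l[u](x)$ as a function on all of $\mathbb{R}$, one checks that $\tilde v \in C^3(\mathbb{R})$, that $\tilde v = \lambda u$ on $[-l, l]$, and that the explicit integral formula shows $\tilde v$ lies, on $(l, \infty)$, in the two-dimensional space of decaying solutions of $EI v^{(4)} + k v = 0$ spanned by $e^{-\alpha(x-l)/\sqrt 2}\cos(\alpha(x-l)/\sqrt 2)$ and $e^{-\alpha(x-l)/\sqrt 2}\sin(\alpha(x-l)/\sqrt 2)$. Hence $C^3$-matching at $x = l$ forces $(\lambda u(l), \lambda u'(l), \lambda u''(l), \lambda u'''(l))$ to lie in a fixed two-dimensional subspace of $\mathbb{R}^4$, giving two linear relations among $u(l), u'(l), u''(l), u'''(l)$; by symmetry, two analogous relations hold at $x = -l$. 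For the converse, any solution $u$ of the ODE satisfying these four conditions extends uniquely to a $C^3$ function on $\mathbb{R}$ that equals $\lambda u$ on $[-l,l]$ and decays at infinity; its difference with $\mathcal{K}_l[u]$ is then a $C^4$ decaying solution of $EI w^{(4)} + kw = 0$ on all of $\mathbb{R}$, which must be zero, so $\mathcal{K}_l[u] = \lambda u$ on $[-l,l]$.

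Finally I would write the four boundary conditions as a $4\times 4$ homogeneous linear system in the four coefficients of the general ODE solution; nontrivial eigenfunctions exist exactly when the corresponding determinant vanishes. Because $\mathcal{K}_l$ commutes with $x \mapsto -x$, the even and odd parts of $u$ decouple, splitting this $4\times 4$ system into two $2\times 2$ subsystems and factoring the determinant accordingly. Rewriting everything in the dimensionless variable $L = 2\sqrt 2 \, l\, \alpha$ (so that $\alpha\kappa l/\sqrt 2 = \kappa L/4$) and simplifying the resulting combinations of $\sin, \cos, \sinh, \cosh$ of $\kappa L/4$, the product of the two factors should reorganize into exactly $\psi_L(\kappa) - q(\kappa)$ up to a nonzero prefactor. \emph{The main obstacle} is precisely this last algebraic identification: matching the expanded determinant to the specific closed forms of $\psi_L$ and $q$ defined in Section~\ref{section_preliminaries} is routine in spirit but delicate in bookkeeping, and it is this step, rather than the reduction to an ODE or the derivation of the boundary conditions, that constitutes the technical core of the proof.
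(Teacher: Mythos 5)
The paper does not prove Proposition~\ref{proposition_characteristic}; it imports it verbatim from~\cite{ChoiI}, so there is no in-text argument for you to be compared against. What can be assessed is whether your plan would, in principle, deliver the result.

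Your high-level strategy is the natural one and is almost certainly the right skeleton: $K$ is the Green's function of $EI\,\partial_x^4+k$ on $\mathbb{R}$, so $\mathcal{K}_l[u]=\lambda u$ forces $u^{(4)}+(\alpha\kappa)^4u=0$ on $(-l,l)$; the $C^3$ matching of $\mathcal{K}_l[u]$ at $\pm l$ with a decaying exterior solution of $EI\,v^{(4)}+kv=0$ yields two conditions per endpoint; parity of $\mathcal{K}_l$ splits the resulting $4\times4$ system into two $2\times2$ blocks; and the converse direction via uniqueness of decaying solutions on $\mathbb{R}$ is handled correctly. The rescaling $\alpha\kappa l/\sqrt2=\kappa L/4$ is also right. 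None of that is in question. The genuine gap is exactly the one you flag yourself and then set aside: you assert, without computation, that the product of the two $2\times2$ determinants ``should reorganize into exactly $\psi_L(\kappa)-q(\kappa)$ up to a nonzero prefactor.'' Given how $\psi_L$ and $q$ are defined in Section~\ref{section_preliminaries}---with $q=(\kappa-1)^2/(\kappa+1)^2$, $f(t)=(2-t)-\sqrt{(2-t)^2-1}$, and a phase $\hat g$ defined piecewise via $\arctan\!\bigl(4\kappa(\kappa^2-1)/(\kappa^4-6\kappa^2+1)\bigr)$, which is $\arg\!\bigl((\kappa+i)^4\bigr)$ up to branch---this identification is not a formality. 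It involves an argument-versus-modulus decomposition of the complex combinations $\kappa\pm i$ that appear in the determinant, the choice of branch that makes $g_L$ a bijection of $[0,\infty)$ onto itself, and the nontrivial fact that neither $2\times2$ factor introduces spurious zeros that would not be captured by the single scalar equation $\psi_L(\kappa)=q(\kappa)$. Since the proposition \emph{is} this identification, declaring it ``routine in spirit but delicate in bookkeeping'' and stopping there leaves the statement unproved: you have set up the reduction correctly, but you have not produced the characteristic equation, which is the entire content of the result.
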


In this paper,
we provide a complete proof of the fact
\begin{equation}
\label{equation_psi>q}
\psi_L(\kappa) > q(\kappa)
\quad
\text{for every }
\kappa > 0
\text{ and for every }
L > 0,
\end{equation}
from which the following result follows immediately
by Proposition~\ref{proposition_characteristic}.

\begin{theorem}
\label{theorem_main}
There are no nontrivial eigenvalues of the operator $\mathcal{K}_l$
outside the interval $(0,1/k)$.
\end{theorem}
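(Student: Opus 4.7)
Theorem~\ref{theorem_main} reduces immediately to the inequality \eqref{equation_psi>q} via Proposition~\ref{proposition_characteristic}: were there an eigenvalue $\lambda \in (-\infty,0) \cup (1/k,\infty)$ of $\mathcal{K}_l$, the corresponding pair $(\kappa, L)$ with $\kappa = \sqrt[4]{1 - 1/(\lambda k)}$ and $L = 2\sqrt{2}\,l\,\alpha$ would satisfy $\psi_L(\kappa) = q(\kappa)$, contradicting \eqref{equation_psi>q}. So the entire task is to prove the strict inequality $\psi_L(\kappa) > q(\kappa)$ for every $\kappa > 0$ and every $L > 0$.

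The plan is to start from the explicit formulas for $\psi_L$ and $q$ recorded in Section~\ref{section_preliminaries}. Given the structure of the kernel $K$, these should decompose into combinations of hyperbolic and trigonometric functions of arguments proportional to $\kappa L$. Fixing $\kappa > 0$ and setting $F_\kappa(L) := \psi_L(\kappa) - q(\kappa)$, I would treat $F_\kappa$ as a one-variable function of $L$: evaluate its limits as $L \to 0^+$ and as $L \to \infty$, then compute $\partial_L F_\kappa$ in the hope of extracting a monotonicity property which, combined with a boundary value $\ge 0$, forces $F_\kappa(L) > 0$ throughout $(0,\infty)$.

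If direct monotonicity in $L$ proves too delicate, the fallback is to clear denominators and write
\[
\psi_L(\kappa) - q(\kappa) = \frac{P(\kappa, L)}{Q(\kappa, L)},
\]
with $Q>0$ manifest, and then rewrite the numerator $P$ by product-to-sum identities or Taylor expansion in $L$ into a form displaying it as a sum of manifestly positive contributions. A third route would reverse the roles and fix $L > 0$, exploiting the more tractable of the two asymptotic regimes $\kappa \to 0$ or $\kappa \to \infty$ together with a monotonicity argument in $\kappa$.

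The main obstacle I anticipate is the intrinsic sign tension in $\psi_L$ between hyperbolic growth and trigonometric oscillation: a crude bound that discards the trigonometric factor is too loose precisely where that factor turns negative, and the inequality has to hold \emph{uniformly} in the two parameters. For this reason I expect the proof to split into at least two regimes—a natural candidate being ``small $\kappa L$'', where a Taylor expansion should make $F_\kappa$ positive term-by-term, versus ``large $\kappa L$'', where hyperbolic growth strictly dominates any trigonometric excursion—with careful cross-term bookkeeping or an explicit matching argument bridging the intermediate regime.
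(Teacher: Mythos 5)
Your reduction to the inequality $\psi_L(\kappa) > q(\kappa)$ is of course correct, but what follows is a list of candidate strategies, not a proof, and each of the three candidates either fails on the actual structure of the problem or leaves the hardest part untouched.

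First, a structural misreading: from \eqref{equation_psi} and \eqref{equation_g}, $\psi_L(\kappa) = e^{L\kappa} f\bigl(\cos(L\kappa - \hat{g}(\kappa))\bigr)$, so the trigonometric argument is \emph{not} $\propto \kappa L$; the $\kappa$-dependent phase shift $\hat{g}(\kappa)$ (which sweeps the full range $(-2\pi,0]$) is essential and never disappears. This kills the ``small-$\kappa L$ Taylor expansion, term-by-term positive'' half of your two-regime plan for a second reason as well: the inequality is asymptotically sharp. One has $\psi_L(0) = q(0) = 1$ exactly, for every $L$, and the numerical margin $f(\cos\hat{g}(\kappa)) - q(\kappa)$ shrinks to $0$ as $\kappa\to 0^+$; a term-by-term-positive expansion cannot exhibit a quantity whose leading terms cancel identically. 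The paper handles precisely this regime (Lemma~\ref{lemma_kappa<=1}) not by expansion but by a mean-value argument: the key observation (Proposition~\ref{proposition_key}) is that any failure of the inequality on $(0,1)$, together with the boundary data $\psi_L(0)=q(0)$, $\psi_L(1)>q(1)$, forces a point $\kappa_0$ where \emph{both} $\psi_L(\kappa_0)\le q(\kappa_0)$ and $\psi_L'(\kappa_0)=q'(\kappa_0)$, and these two conditions combined with the derivative bound of Lemma~\ref{lemma_psi}(b) give an immediate contradiction. You do not anticipate this device.

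Your Plan A (monotonicity in $L$) touches a genuine ingredient: the paper does compute $\partial_L\psi_L(\kappa)\ge 0$ (inside the proof of Lemma~\ref{lemma_nolowerbound}). But you stop there. Pushing $L\to 0^+$ gives $\psi_L(\kappa)\to f(\cos\hat{g}(\kappa))$, so Plan A reduces to the boundary inequality $f(\cos\hat{g}(\kappa)) \ge q(\kappa)$ for all $\kappa>0$ — and that inequality is exactly as hard as the original problem, requiring the closed-form inversion of $\hat{g}$ and the algebraic identity manipulations that occupy the proof of Lemma~\ref{lemma_tildepsi>tildeq}. The paper instead uses monotonicity in $L$ only to transfer a hypothetical failure downward in $L$, producing a one-parameter family $\kappa_L$ of tangency points, and then proves (Lemmas~\ref{lemma_kappatoinfty} and~\ref{lemma_Lkappa}) that $\kappa_L\to\infty$ and $g_L(\kappa_L)\to 2\pi^-$ as $L\to 0^+$, so that the problematic window is localized to $t:=g_L(\kappa)\in(3\pi/2,2\pi)$. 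The decisive computation is then carried out after the change of variables $t=g_L(\kappa)$, where $\hat{g}^{-1}(-t)$ has a clean closed form and the inequality collapses to $(\cos t - 1)^2 > 0$. None of this — the contradiction scaffold, the $L\to 0^+$ asymptotics, the change of variables, the explicit inversion — appears in your proposal. What you wrote is a reasonable opening move, but the proof has not been started.
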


Theorem~\ref{theorem_main} implies that
the operator $\mathcal{K}_l$ is positive
and contractive in dimension-free sense,
which is relevant
to the existence and the uniqueness of
the solution to the nonlinear and nonuniform problem
\eqref{equation_nonlinear}.
We remark that
the proof of Lemma 3.2 in \cite{ChoiI},
which also asserts \eqref{equation_psi>q},
was incomplete in that it
only amounts to showing that
$\psi_L(\kappa) > q(\kappa)$
for every {\em sufficiently small} $\kappa > 0$
for every $L > 0$,
which is indeed far from complete.
However, our proof of
\eqref{equation_psi>q} indicates that
the conclusions of \cite{ChoiI},
including Lemma 3.2 and Theorems 4.1, 4.2 therein,
remain unchanged.

\section{Preliminaries}
\label{section_preliminaries}

For $\kappa \geq 0$, define
\begin{align}
q(\kappa)
 &=
\frac{(\kappa - 1)^2}{(\kappa + 1)^2},
\label{equation_q} \\
\psi_L(\kappa)
 &=
e^{L \kappa} \cdot f\left( \cos{g_L(\kappa)} \right),
\label{equation_psi}
\end{align}
where
\begin{equation}
\label{equation_f}
f(t)
 =
(2 - t) - \sqrt{(2-t)^2 - 1}.
\end{equation}
Here,
$L := 2\sqrt{2}l\alpha$,
$l$, $\alpha$ are {\em positive} constants,
and
the function $g_L$,
parametrized by $L > 0$,
is one-to-one and onto
from $[0,\infty)$ to $[0,\infty)$
with $g_L(0) = 0$.
Specifically,
$g_L$, which was denoted by $g$ in \cite{ChoiI},
is defined as follows:
\begin{equation}
\label{equation_g}
g_L(\kappa)
 =
L \kappa - \hat{g}(\kappa),
\end{equation}
where
\begin{equation}
\label{equation_ghat}
\hat{g}(\kappa)
 =
\begin{cases}
\arctan
\left\{
\frac{4 \kappa \left( \kappa^2 - 1 \right)}
{\kappa^4 - 6 \kappa^2 + 1}
\right\}
&
\text{if } 0 \leq \kappa < \sqrt{2} - 1, \\
-\frac{\pi}{2}
&
\text{if } \kappa = \sqrt{2} - 1, \\
-\pi
+
\arctan
\left\{
\frac{4 \kappa \left( \kappa^2 - 1 \right)}
{\kappa^4 - 6 \kappa^2 + 1}
\right\}
&
\text{if } \sqrt{2} - 1 < \kappa < 
 \sqrt{2} + 1, \\
-\frac{3\pi}{2}
&
\text{if } \kappa = \sqrt{2} + 1, \\
-2\pi
+
\arctan
\left\{
\frac{4 \kappa \left( \kappa^2 - 1 \right)}
{\kappa^4 - 6 \kappa^2 + 1}
\right\}
&
\text{if } \kappa > \sqrt{2} + 1.
\end{cases}
\end{equation}
Here, the branch of $\arctan$ is taken such that
$\arctan(0) = 0$.
As is shown in \cite{ChoiI},
$\hat{g}$ is continuous and differentiable on $[0,\infty)$,
and is strictly decreasing 
from $\hat{g}(0) = 0$ 
to $\lim_{\kappa \to \infty}{\hat{g}(\kappa)} = -2\pi$.
In fact, we have \cite[pp. 43--44]{ChoiI}
\begin{align}
\hat{g}^\prime(\kappa)
 &=
-\frac{4}{\kappa^2 + 1},
\label{equation_ghat'} \\
\left. g_L \!\right.^\prime\!(\kappa)
 &=
L
+
\frac{4}{\kappa^2 + 1}.
\label{equation_g'}
\end{align}
The inverse $g_L^{-1}$ of $g_L$ is 
differentiable,
and is one-to-one and onto
from $[0,\infty)$ to $[0,\infty)$
with
$g_L^{-1}(0) = 0$.

Note that the function $q$ is differentiable.
The function $\psi_L$ is continuous, 
but is only piecewise differentiable.
(See Lemma~\ref{lemma_psi} (a) and its proof below.)
The following observation,
which is immediate
from the intermediate value theorem
and the mean value theorem,
plays a key role in our proof
of \eqref{equation_psi>q},
and hence
Theorem~\ref{theorem_main}.

\begin{proposition}
\label{proposition_key}
Suppose $\xi$ and $\eta$ are continuous and
piecewise differentiable functions
on $[a,b]$
satisfying
$\xi(a) \geq \eta(a)$ and 
$\xi(b) \geq \eta(b)$, 
and possible discontinuities
of $\xi^\prime$ and $\eta^\prime$ are discrete.
Suppose the equation
$\xi(\kappa) \leq \eta(\kappa)$
has a solution in $(a,b)$,
and $\xi$ and $\eta$ are differentiable
at every such solution.
Then there exists $\kappa_0$ in $(a,b)$ such that
$\xi\left( \kappa_0 \right) \leq \eta\left( \kappa_0 \right)$
and
$\xi^\prime\left( \kappa_0 \right)
=
\eta^\prime\left( \kappa_0 \right)$.
\end{proposition}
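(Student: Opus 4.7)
The plan is to set $h := \xi - \eta$ and to locate an interior minimum of $h$ at which Fermat's interior extremum principle forces $h'(\kappa_0) = 0$, which is exactly the desired equality $\xi'(\kappa_0) = \eta'(\kappa_0)$. Since $h$ is continuous on the compact interval $[a,b]$, it attains its global minimum on $[a,b]$, say with value $m$. The assumption that $\xi(\kappa) \leq \eta(\kappa)$ has some solution in $(a,b)$ gives $m \leq 0$ at once.

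I would then split the argument by the sign of $m$. If $m < 0$, any minimizer $\kappa_0$ must lie in the open interval, since $h(a), h(b) \geq 0 > m$ rules out the endpoints; moreover $\kappa_0$ satisfies $\xi(\kappa_0) \leq \eta(\kappa_0)$, so by hypothesis $\xi$ and $\eta$ are differentiable there, and Fermat applied to $h$ at $\kappa_0$ yields $h'(\kappa_0) = 0$. If instead $m = 0$, the global minimizer could in principle be $a$ or $b$, so rather than using an arbitrary minimizer I would reuse the given solution $\kappa^* \in (a,b)$: sandwiching $0 \leq h(\kappa^*) \leq 0$ shows $\kappa^*$ is already an interior global minimum of $h$, at which differentiability is again supplied by the hypothesis and Fermat applies.

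The only genuine obstacle is ensuring that the extremum one locates actually sits at a point of differentiability of both $\xi$ and $\eta$; this is exactly what the carefully worded hypothesis is arranged to deliver. The discrete set of derivative discontinuities is harmless because the statement restricts attention to points in the sublevel set $\{\xi \leq \eta\}$, where smoothness is assumed. Nothing beyond the intermediate value theorem (to produce the interior solution situation) and the mean value / Fermat principle (to extract the matching derivatives) enters the argument.
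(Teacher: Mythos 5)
Your proof is correct, and it is cleaner than what the paper's one-line hint (``immediate from the intermediate value theorem and the mean value theorem'') actually suggests. The paper sketches a Rolle-style route: use IVT to locate points $a'\le\kappa^*\le b'$ with $h(a')=h(b')=0$, then apply Rolle on $[a',b']$. That route is awkward here, because Rolle requires differentiability on all of $(a',b')$, whereas the hypothesis only guarantees differentiability of $\xi,\eta$ at points of the sublevel set $\{\xi\le\eta\}$; one would need an extra argument to arrange $h\le 0$ on the chosen subinterval (and the degenerate case $h(\kappa^*)=0$ then collapses that subinterval to a point). Your route sidesteps all of this: the extreme value theorem yields a global minimizer, the endpoint conditions $h(a),h(b)\ge 0$ together with $m\le 0$ force an interior minimizer, and because the minimum value is $\le 0$, that minimizer automatically lies in the set where the hypothesis hands you differentiability, so Fermat applies. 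Your split into $m<0$ (any minimizer works and is interior) versus $m=0$ (reuse the given solution $\kappa^*$, which is then itself an interior global minimizer) correctly handles the only case that could land a minimizer at an endpoint. In short: same underlying Fermat idea, but your version is the one that actually closes the loop rigorously.
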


%
%

\section{The functions $\psi_L$ and $q$}

We first examine
properties of the functions $\psi_L$ and $q$.
From \eqref{equation_q}, we have
\begin{align}
q^\prime(\kappa)
 &=
\left\{ 
\frac{\left( \kappa - 1 \right)^2}{\left( \kappa + 1 \right)^2} 
\right\}^\prime
 =
\frac
{
2\left( \kappa - 1 \right) \cdot \left( \kappa + 1 \right)^2
-
\left( \kappa - 1 \right)^2 \cdot 2 \left( \kappa + 1 \right)
}
{\left( \kappa + 1 \right)^4}
\nonumber \\
 &=
\frac
{
2
\left( \kappa - 1 \right)
\left\{
\left( \kappa + 1 \right)
-
\left( \kappa - 1 \right)
\right\}
}
{\left( \kappa + 1 \right)^3}
 =
\frac{4\left( \kappa - 1 \right)}{\left( \kappa + 1 \right)^3}.
\label{equation_q'} 
\end{align}
The properties of the function $q(\kappa)$
that we need,
are summarized in Lemma~\ref{lemma_q},
whose proof is immediate from \eqref{equation_q}
and \eqref{equation_q'}.

\begin{lemma}
\label{lemma_q}
$q$ is strictly decreasing on $[0,1]$ 
from $q(0) = 1$ to $q(1) = 0$,
and strictly increasing on $[1,\infty)$ approaching $1$.
In particular,
$0 \leq q(\kappa) < 1$ for $\kappa > 0$.
\end{lemma}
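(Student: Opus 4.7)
My plan is to read off everything directly from the closed forms \eqref{equation_q} and \eqref{equation_q'}; no non-trivial ideas are needed, and the proof will reduce to sign analysis plus two limit evaluations.

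First I would handle the boundary values: plugging $\kappa = 0$ and $\kappa = 1$ into \eqref{equation_q} gives $q(0) = 1$ and $q(1) = 0$ immediately. For monotonicity, I would use \eqref{equation_q'} and note that for $\kappa \geq 0$ the denominator $(\kappa+1)^3$ is strictly positive, so $\mathrm{sgn}\, q'(\kappa) = \mathrm{sgn}(\kappa - 1)$. This yields $q'(\kappa) < 0$ on $[0,1)$ and $q'(\kappa) > 0$ on $(1,\infty)$, giving strict decrease on $[0,1]$ and strict increase on $[1,\infty)$.

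Next I would compute $\lim_{\kappa \to \infty} q(\kappa) = \lim_{\kappa \to \infty} (\kappa-1)^2/(\kappa+1)^2 = 1$ to establish the approach to $1$ at infinity. Finally, for the concluding inequality, since $q(\kappa) = \left( (\kappa-1)/(\kappa+1) \right)^2$, nonnegativity is automatic, and for $\kappa > 0$ the strict inequality $|\kappa - 1| < \kappa + 1$ gives $q(\kappa) < 1$; alternatively, this follows by combining the monotonicity with the established values at $1$ and $\infty$.

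There is no real obstacle here: every assertion is an algebraic or sign-based consequence of the two identities already derived just above the lemma statement. The only thing worth being careful about is to make sure the endpoint $\kappa = 1$ is included in both monotonic pieces (so that the global minimum $0$ is attained) and to check nonnegativity on the boundary $\kappa = 0$ separately from the statement "$0 \leq q(\kappa) < 1$ for $\kappa > 0$," since the latter excludes $\kappa = 0$ where $q = 1$.
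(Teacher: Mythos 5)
Your proof is correct and follows exactly the route the paper intends: the paper states that the lemma is ``immediate from \eqref{equation_q} and \eqref{equation_q'},'' and your sign analysis of $q'$, boundary evaluations, and limit at infinity are precisely the elaboration of that remark. Nothing more is needed.
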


Note that the function $f$ in \eqref{equation_f}
is continuous and positive.
It is differentiable except at $t = 1$.
In fact, we have
\begin{align}
f^\prime(t)
 &=
-1
-
\frac{2(2-t) \cdot (-1)}{2\sqrt{(2-t)^2 - 1}}
 =
-1
+
\frac{2-t}{\sqrt{(2-t)^2 - 1}}
 =
\frac{f(t)}{\sqrt{(2-t)^2 - 1}}
\label{equation_f'} \\
 &\geq
0,
\nonumber
\end{align}
and hence
$f$ is increasing.
It follows that
\begin{equation}
\label{equation_fminmax}
0
 <
3 - 2\sqrt{2}
 \leq
f(\cos{g_L(\kappa)})
 \leq
1
\quad
\text{for }
\kappa > 0,
\end{equation}
since $-1 \leq \cos{g_L(\kappa)} \leq 1$
and $f(-1) = 3 - 2\sqrt{2}$, $f(1) = 1$.
So
$\psi_L(\kappa)
 =
e^{L \kappa} f\left( \cos{\kappa} \right)
 \geq
\left( 3 - 2\sqrt{2} \right) e^{L \kappa}$,
and hence we have
\begin{equation}
\label{equation_psi>0}
\psi_L(\kappa)
 >
0
\quad
\text{for }
\kappa > 0,
\
L
 >
0,
\end{equation}
\begin{equation}
\label{equation_psitoinfty}
\lim_{\kappa \to \infty}{\psi_L(\kappa)} = \infty
\quad
\text{ for }
L > 0.
\end{equation}
By \eqref{equation_f'}, we have
\begin{align}
\left. \psi_L \!\right.^\prime\!(\kappa)
 &=
e^{L \kappa}
\left\{
L
\cdot
f\left( \cos{g_L(\kappa)} \right)
+
f^\prime\left( \cos{g_L(\kappa)} \right)
\cdot
\left( -\sin{g_L(\kappa)} \right)
\cdot
\left. g_L \!\right.^\prime\!(\kappa)
\right\}
\nonumber \\
 &=
e^{L \kappa}
\left[
L
\cdot
f\left( \cos{g_L(\kappa)} \right)
+
\frac
{
f\left( \cos{g_L(\kappa)} \right)
\cdot
\left( -\sin{g_L(\kappa)} \right)
\cdot
\left. g_L \!\right.^\prime\!(\kappa)
}
{
\sqrt{\left( 2 - \cos{g_L(\kappa)} \right)^2 - 1}
}
\right]
\nonumber \\
 &=
\psi_L(\kappa)
\left\{
L
-
\frac
{\sin{g_L(\kappa)}}
{\sqrt{\left( 2 - \cos{g_L(\kappa)} \right)^2 - 1}}
\cdot
\left. g_L \!\right.^\prime\!(\kappa)
\right\}.
\label{equation_psi'}
\end{align}
Using the identity
\begin{equation}
\label{equation_3-cos}
\left( 2 - \cos{t} \right)^2 - 1
 =
\cos^2{t} - 4 \cos{t} + 3
 =
\left( 1 - \cos{t} \right)
\left( 3 - \cos{t} \right),
\end{equation}
we have
\begin{align}
\lim_{t \to 0\pm}
{
\frac
{\sin{t}}
{
\sqrt{\left( 2 - \cos{t} \right)^2 - 1}}
}
 &=
\lim_{t \to 0\pm}
{
\frac
{
\pm
\sqrt{
\left( 1 - \cos{t} \right)
\left( 1 + \cos{t} \right)
}
}
{
\sqrt{
\left( 1 - \cos{t} \right)
\left( 3 - \cos{t} \right)
}
}
}
\nonumber \\
 &=
\pm
\lim_{t \to 0\pm}
{
\frac
{
\sqrt{
\left( 1 + \cos{t} \right)
}
}
{
\sqrt{
\left( 3 - \cos{t} \right)
}
}
}
 =
\pm 1.
\label{equation_lim0pm}
\end{align}
Since
\begin{align*}
\lefteqn{
\left(
\frac{\sin{t}}{\sqrt{(2-\cos{t})^2 - 1}}
\right)^\prime
 =
\frac
{
\cos{t} 
\cdot 
\sqrt{(2-\cos{t})^2 - 1}
-
\sin{t}
\cdot
\frac
{2 (2 - \cos{t}) \cdot \sin{t}}
{2 \sqrt{(2-\cos{t})^2 - 1}}
}
{(2-\cos{t})^2 - 1}
} \\
 &\qquad =
\frac
{
\cos{t} 
\cdot 
\left\{ (2-\cos{t})^2 - 1 \right\}
-
\left( 1 - \cos^2{t} \right)
\left( 2 - \cos{t} \right)
}
{
\sqrt{\left( 2 - \cos{t} \right)^2 - 1}^3
} \\
 &\qquad =
\frac
{
-2 \cos^2{t}
+
4 \cos{t}
-
2
}
{
\sqrt{\left( 2 - \cos{t} \right)^2 - 1}^3
}
 =
-
\frac
{
2
\left( 1 - \cos{t} \right)^2
}
{
\sqrt{\left( 2 - \cos{t} \right)^2 - 1}^3
}
 \leq
0,
\end{align*}
the periodic function
$\sin{t}\left/\sqrt{\left( 2 - \cos{t} \right)^2 - 1}\right.$ 
is strictly decreasing
on $(0,2\pi)$,
and hence, together with
\eqref{equation_lim0pm},
we have
\begin{equation}
\label{equation_psi'minmax}
-1
 \leq
\frac
{\sin{t}}
{\sqrt{\left( 2 - \cos{t} \right)^2 - 1}}
 \leq
1.
\end{equation}

\begin{lemma}
\label{lemma_psi}
(a)
$\psi_L$ is differentiable at 
every $\kappa > 0$ such that
$\psi_L(\kappa) \leq q(\kappa)$.

(b)
$
\left. \psi_L \!\right.^\prime\!(\kappa)
 \geq
-\psi_L(\kappa)
\cdot
4/\left( \kappa^2 + 1 \right)$
for every $\kappa > 0$ 
where $\psi_L$ is differentiable.
\end{lemma}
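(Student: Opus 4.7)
The plan is to first pin down exactly where $\psi_L$ can fail to be differentiable, then observe that those bad points are harmless for (a) and are simply excluded from (b)'s hypothesis. Since $e^{L\kappa}$, $\cos$, and $g_L$ are smooth on $(0,\infty)$, and since \eqref{equation_f'} shows that $f$ is differentiable on its domain except where $\sqrt{(2-t)^2 - 1}$ vanishes---namely at $t = 1$ and $t = 3$, of which only $t = 1$ lies in the range $[-1,1]$ of $\cos g_L$---the chain rule delivers differentiability of $\psi_L$ at every $\kappa > 0$ with $\cos g_L(\kappa) \neq 1$.

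For part (a), I would argue by contraposition. At any $\kappa > 0$ where $\cos g_L(\kappa) = 1$, formula \eqref{equation_psi} together with $f(1) = 1$ gives $\psi_L(\kappa) = e^{L\kappa} > 1$, while Lemma~\ref{lemma_q} furnishes $q(\kappa) < 1$; hence $\psi_L(\kappa) > q(\kappa)$ at every non-differentiability point. Consequently, $\psi_L(\kappa) \leq q(\kappa)$ can occur only at points where $\psi_L$ is differentiable.

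For part (b), the argument is a short chain of inequalities built on formula \eqref{equation_psi'}. At any $\kappa > 0$ where $\psi_L$ is differentiable, $\cos g_L(\kappa) \neq 1$ so the formula is well-defined. Combining the upper bound $\sin t / \sqrt{(2-\cos t)^2 - 1} \leq 1$ from \eqref{equation_psi'minmax}, the positivity $\left.g_L\!\right.^\prime\!(\kappa) = L + 4/(\kappa^2+1) > 0$ from \eqref{equation_g'}, and the positivity $\psi_L(\kappa) > 0$ from \eqref{equation_psi>0}, one obtains
\[
\left.\psi_L\!\right.^\prime\!(\kappa) \geq \psi_L(\kappa) \bigl\{ L - \left.g_L\!\right.^\prime\!(\kappa) \bigr\} = -\psi_L(\kappa) \cdot \frac{4}{\kappa^2 + 1},
\]
which is the desired lower bound.

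I foresee no substantive obstacle. The entire proof rests on isolating the single square-root singularity of $f$ at $t = 1$ and noting that it forces $\psi_L(\kappa) > 1$, whereas $q(\kappa) < 1$ throughout $(0,\infty)$; once that dichotomy is in hand, (a) is immediate by contraposition and (b) is pure calculation.
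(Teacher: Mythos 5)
Your proof is correct and follows essentially the same route as the paper: identify the non-differentiability points of $\psi_L$ as exactly those $\kappa>0$ with $\cos g_L(\kappa)=1$ (equivalently $g_L(\kappa)=2\pi n$), note that $\psi_L=e^{L\kappa}>1>q$ there to get (a) by contraposition, and combine \eqref{equation_psi'}, \eqref{equation_psi'minmax}, \eqref{equation_psi>0}, and \eqref{equation_g'} to get (b). The only cosmetic difference is that you trace the singularity to the vanishing of $\sqrt{(2-t)^2-1}$ at $t=1$ explicitly, while the paper phrases the bad set directly as $\{g_L^{-1}(2\pi n)\}_{n\geq 1}$.
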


\begin{proof}
Let $\kappa > 0$.
By \eqref{equation_psi'},
$\psi_L$ is differentiable except
at $g_L^{-1}(2\pi n)$ for $n = 1,2,3,\ldots$
For $n = 1,2,3,\ldots$,
$\psi_L\left( g_L^{-1}(2\pi n) \right)
 =
e^{L \cdot g_L^{-1}(2\pi n)}
\cdot
f(2\pi n)
 =
e^{L \cdot g_L^{-1}(2\pi n)}
 >
1$
by \eqref{equation_psi} and \eqref{equation_f},
and
$q\left( g_L^{-1}(2\pi n) \right) < 1$
by Lemma~\ref{lemma_q}.
So
$\psi_L\left( g_L^{-1}(2\pi n) \right)
 > 
q\left( g_L^{-1}(2\pi n) \right)$
for $n = 1,2,3,\ldots$,
which shows (a).

By \eqref{equation_psi'}, \eqref{equation_psi'minmax},
we have
$
\left. \psi_L \!\right.^\prime\!(\kappa)
 \geq
\psi_L(\kappa)
\cdot
\left\{
L
-
\left. g_L \!\right.^\prime\!(\kappa)
\right\}$,
since $\psi_L(\kappa) > 0$ by \eqref{equation_psi>0}
and $\left. g_L \!\right.^\prime\!(\kappa) > 0$
by \eqref{equation_g'}.
Hence (b) follows 
from \eqref{equation_g'}.
\end{proof}

%
%

\section{Proof of the main result}

In proving \eqref{equation_psi>q},
we will divide the cases into the following:
(i) When $0 < \kappa \leq 1$,
and
(ii) when $\kappa > 1$.
The former case is settled with
Lemma~\ref{lemma_kappa<=1} below.

\begin{lemma}
\label{lemma_kappa<=1}
If $0 < \kappa \leq 1$,
then
$\psi_L(\kappa) > q(\kappa)$
for every $L > 0$.
\end{lemma}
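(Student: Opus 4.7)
The strategy is to apply the key observation Proposition~\ref{proposition_key} with $\xi = \psi_L$ and $\eta = q$ on the interval $[0,1]$. First I would verify the boundary conditions. At $\kappa = 0$ we have $g_L(0) = 0$, so $\psi_L(0) = e^{0} \cdot f(\cos 0) = f(1) = 1 = q(0)$. At $\kappa = 1$ the lower bound \eqref{equation_fminmax} yields
\[
\psi_L(1) \geq \left( 3 - 2\sqrt{2} \right) e^{L} > 0 = q(1),
\]
which simultaneously disposes of the $\kappa = 1$ case claimed in the lemma and supplies the endpoint inequality $\psi_L(1) \geq q(1)$ required by Proposition~\ref{proposition_key}.

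Now suppose toward contradiction that the inequality $\psi_L(\kappa) \leq q(\kappa)$ admits a solution in $(0,1)$. Lemma~\ref{lemma_psi}(a) guarantees that $\psi_L$ is differentiable at every such solution, and $q$ is differentiable everywhere, so Proposition~\ref{proposition_key} applies on $[0,1]$ and produces a point $\kappa_0 \in (0,1)$ satisfying
\[
\psi_L\!\left( \kappa_0 \right) \leq q\!\left( \kappa_0 \right),
\qquad
\left. \psi_L \!\right.^\prime\!\left( \kappa_0 \right) = q^\prime\!\left( \kappa_0 \right).
\]

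The remaining step is to pinch $\psi_L(\kappa_0)$ between two incompatible bounds. From Lemma~\ref{lemma_psi}(b) and $\psi_L(\kappa_0) > 0$, together with the identity $q^\prime(\kappa_0) = 4(\kappa_0 - 1)/(\kappa_0+1)^3 < 0$ on $(0,1)$, one rearranges $q^\prime(\kappa_0) \geq -4\psi_L(\kappa_0)/(\kappa_0^2 + 1)$ into the lower bound
\[
\psi_L\!\left( \kappa_0 \right)
 \geq
\frac{\left( 1 - \kappa_0 \right)\left( \kappa_0^2 + 1 \right)}
{\left( \kappa_0 + 1 \right)^3}.
\]
Combining this with the upper bound $\psi_L(\kappa_0) \leq q(\kappa_0) = (1-\kappa_0)^2/(\kappa_0+1)^2$ and dividing by the positive factor $(1 - \kappa_0)/(\kappa_0+1)^2$ reduces the two inequalities to $(1 - \kappa_0)(\kappa_0 + 1) \geq \kappa_0^2 + 1$, i.e., $2\kappa_0^2 \leq 0$, contradicting $\kappa_0 \in (0,1)$.

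The main obstacle, as I see it, is not any single calculation but rather setting up the argument so that the derivative estimate of Lemma~\ref{lemma_psi}(b) can be brought to bear: one needs the contact-point phenomenon of Proposition~\ref{proposition_key} to equate $\psi_L^\prime$ with the known, negative quantity $q^\prime$, so that the otherwise one-sided bound on $\psi_L^\prime$ becomes usable. Once the framework is in place, the closing algebra is routine because every factor involved has a definite sign on $(0,1)$.
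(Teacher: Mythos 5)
Your proof is correct and follows essentially the same route as the paper's: verify the endpoint values $\psi_L(0)=q(0)$ and $\psi_L(1)>q(1)$, invoke Proposition~\ref{proposition_key} together with Lemma~\ref{lemma_psi}(a) to produce a contact point $\kappa_0\in(0,1)$ with $\psi_L(\kappa_0)\le q(\kappa_0)$ and $\psi_L'(\kappa_0)=q'(\kappa_0)$, then combine Lemma~\ref{lemma_psi}(b) with the explicit formulas \eqref{equation_q} and \eqref{equation_q'} to force $\kappa_0^2\le0$. The only cosmetic difference is that you isolate a lower bound on $\psi_L(\kappa_0)$ before comparing to $q(\kappa_0)$, whereas the paper substitutes $\psi_L(\kappa_0)\le q(\kappa_0)$ directly into the derivative inequality; the algebra is otherwise identical.
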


\begin{proof}
Note first that 
$\psi_L(1) > 0 = q(1)$
by \eqref{equation_q} and \eqref{equation_psi>0}.
So \eqref{equation_psi>q} holds when $\kappa = 1$.
Note also that $\psi_L(0) = 1 = q(0)$
by \eqref{equation_q} and \eqref{equation_psi}.
Suppose \eqref{equation_psi>q} is not true
for $0 < \kappa < 1$,
so that there exists a solution of
the equation $\psi_L(\kappa) \leq q(\kappa)$
in $(0,1)$
for some $L > 0$.
By Lemma~\ref{lemma_psi} (a),
$\psi_L$ and $q$ are differentiable at 
every such solution.
Thus we can apply Proposition~\ref{proposition_key}
to $\psi_L$ and $q$ on $[0,1]$,
so that there exists $\kappa_0$ in $(0,1)$
satisfying
$\psi_L\left( \kappa_0 \right) \leq q\left( \kappa_0 \right)$,
$\left. \psi_L \!\right.^\prime\!\left( \kappa_0 \right)
 =
q^\prime\left( \kappa_0 \right)$.
So by \eqref{equation_psi>0} and Lemma~\ref{lemma_psi} (b),
we have
\[
q^\prime\left( \kappa_0 \right)
 =
\left. \psi_L \!\right.^\prime\!\left( \kappa_0 \right)
 \geq
-\psi_L\left( \kappa_0 \right)
\cdot
\frac{4}{\kappa_0^2 + 1}
 \geq
-q\left( \kappa_0 \right)
\cdot
\frac{4}{\kappa_0^2 + 1},
\]
and hence
by \eqref{equation_q} and \eqref{equation_q'},
\[
\frac{4\left( \kappa_0 - 1 \right)}{\left( \kappa_0 + 1 \right)^3}
 \geq
-
\frac
{\left( \kappa_0 - 1 \right)^2}
{\left( \kappa_0 + 1 \right)^2}
\cdot
\frac{4}{\kappa_0^2 + 1}.
\]
Since $0 < \kappa_0 < 1$,
this is equivalent to
$
\kappa_0^2 + 1
 \leq
-
\left( \kappa_0^2 - 1 \right)
$,
or
$\kappa_0^2 \leq 0$,
which implies $\kappa_0 = 0$.
This is a contradiction,
and so we conclude $\psi_L(\kappa) > q(\kappa)$
for every $0 < \kappa \leq 1$.
\end{proof}

%
%

For the rest of the paper,
we will deal with the case $\kappa > 1$.
The next result shows the nature of
the equation $\psi_L(\kappa) \leq q(\kappa)$
with respect to $L$.

\begin{lemma}
\label{lemma_nolowerbound}
Suppose 
the equation $\psi_{L_0}(\kappa) \leq q(\kappa)$
has a positive solution for some $L_0 > 0$.
Then,
for each $L$ with $0 < L \leq L_0$,
there exists $\kappa_L > 1$ such that
$\psi_L\left( \kappa_L \right) \leq q\left( \kappa_L \right)$
and
$\left. \psi_L \!\right.^\prime\!\left( \kappa_L \right)
 = q^\prime\left( \kappa_L \right)$.
\end{lemma}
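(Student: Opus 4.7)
The plan is to show that the inequality $\psi_L(\kappa) \leq q(\kappa)$ \emph{propagates downward} in $L$: if it holds at some $\kappa^{*}$ for $L_0$, then it holds at the same $\kappa^{*}$ for every $0 < L \leq L_0$. Once this is established, the desired tangency follows from Proposition~\ref{proposition_key} applied on a compact interval of the form $[1, M]$.

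First, I would establish that $L \mapsto \psi_L(\kappa)$ is non-decreasing for each fixed $\kappa > 0$. Differentiating \eqref{equation_psi} in $L$, using $\partial_L g_L(\kappa) = \kappa$ and the identity $f^\prime(t) = f(t)/\sqrt{(2-t)^2 - 1}$ from \eqref{equation_f'}, gives
\[
\frac{\partial \psi_L(\kappa)}{\partial L}
=
\kappa \, \psi_L(\kappa)
\left[
1 - \frac{\sin g_L(\kappa)}{\sqrt{\left( 2 - \cos g_L(\kappa) \right)^2 - 1}}
\right],
\]
valid except on the discrete set of $L$ where $g_L(\kappa) \in 2\pi \mathbb{Z}$. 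The bracket is nonnegative by \eqref{equation_psi'minmax} and $\psi_L(\kappa) > 0$ by \eqref{equation_psi>0}, so the $L$-derivative is $\geq 0$ wherever defined; combined with the continuity of $L \mapsto \psi_L(\kappa)$, this yields the desired monotonicity.

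Given the hypothesized positive solution $\kappa^{*}$ of $\psi_{L_0}(\kappa) \leq q(\kappa)$, Lemma~\ref{lemma_kappa<=1} forces $\kappa^{*} > 1$. For each $0 < L \leq L_0$ the monotonicity gives $\psi_L(\kappa^{*}) \leq \psi_{L_0}(\kappa^{*}) \leq q(\kappa^{*})$, so the equation $\psi_L \leq q$ still has a solution in $(1, \infty)$. Next, pick $M > \kappa^{*}$ large enough that $\psi_L(M) > q(M)$, which is possible by \eqref{equation_psitoinfty} and the bound $q < 1$ from Lemma~\ref{lemma_q}. Since $\psi_L(1) > 0 = q(1)$ by \eqref{equation_q} and \eqref{equation_psi>0}, and Lemma~\ref{lemma_psi}(a) guarantees that $\psi_L$ is differentiable at every solution of $\psi_L \leq q$ inside $(1, M)$, Proposition~\ref{proposition_key} supplies $\kappa_L \in (1, M)$ with $\psi_L(\kappa_L) \leq q(\kappa_L)$ and $\psi_L^\prime(\kappa_L) = q^\prime(\kappa_L)$, which is the required conclusion.

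The main obstacle is isolating and proving the monotonicity of $\psi_L$ in $L$; once the $L$-derivative is recognized as a nonnegative multiple of the very expression already controlled by \eqref{equation_psi'minmax}, the remainder is a direct invocation of Proposition~\ref{proposition_key} together with facts already recorded in Section~3.
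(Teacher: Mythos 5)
Your proposal is correct and follows the same route as the paper: differentiate $\psi_L$ in $L$, recognize the $L$-derivative as a nonnegative multiple of the quantity controlled by \eqref{equation_psi'minmax} to get monotonicity, propagate the inequality $\psi_L \leq q$ downward from $L_0$, and invoke Proposition~\ref{proposition_key} on a compact interval $[1,M]$ using $\psi_L(1)>0=q(1)$ and \eqref{equation_psitoinfty} together with Lemma~\ref{lemma_psi}(a). The only difference is that you flag the discrete set of $L$ where the $L$-derivative fails and patch it with continuity, a small refinement the paper passes over silently.
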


\begin{proof}
Suppose the equation $\psi_{L_0}(\kappa) \leq q(\kappa)$
has a solution $\kappa_0 > 0$ for some $L_0 > 0$.
Note that $\kappa_0 > 1$ by Lemma~\ref{lemma_kappa<=1}.
From \eqref{equation_g},
we have
$
\partial g_L(\kappa)/\partial L
 =
\kappa
$.
So from \eqref{equation_psi} and \eqref{equation_f'},
we have
\begin{align*}
\frac{\partial \psi_L(\kappa)}{\partial L}
 &=
\frac{\partial}{\partial L}
\left\{
e^{L\kappa} \cdot f\left( \cos{g_L(\kappa)} \right)
\right\} \\
 &=
\kappa e^{L \kappa}
\cdot
f\left( \cos{g_L(\kappa)} \right)
+
e^{L \kappa}
\cdot
f^\prime\left( \cos{g_L(\kappa)} \right)
\cdot
\left( -\sin{g_L(\kappa)} \right)
\cdot
\frac{\partial g_L(\kappa)}{\partial L} \\ 
 &=
\kappa 
\cdot 
e^{L \kappa}
f\left( \cos{g_L(\kappa)} \right)
-
e^{L \kappa}
\cdot
\frac
{
f\left( \cos{g_L(\kappa)} \right)
\sin{g_L(\kappa)}
}
{
\sqrt{\left( 2 - \cos{g_L(\kappa)} \right)^2 - 1}
}
\cdot
\kappa \\ 
 &=
\kappa
\cdot
\psi_L(\kappa)
\left\{
1
-
\frac{\sin{g_L(\kappa)}}
{
\sqrt{\left( 2 - \cos{g_L(\kappa)} \right)^2 - 1}
}
\right\}
 \geq
0,
\end{align*}
where we used 
\eqref{equation_psi>0}
and
\eqref{equation_psi'minmax}
for the last inequality.
Thus $\psi_L\left( \kappa_0 \right)$ is increasing 
with respect to $L$,
and hence
$\psi_L\left( \kappa_0 \right)
 \leq 
\psi_{L_0}\left( \kappa_0 \right)
 \leq 
q\left( \kappa_0 \right)
$
for every $L$ such that $0 < L < L_0$.

Note 
that $\psi_L(1) > 0 = q(1)$
for every $L > 0$.
Since $\lim_{\kappa \to \infty}{q(\kappa)} = 1$
by Lemma~\ref{lemma_q}
and $\lim_{\kappa \to \infty}{\psi_L(\kappa)} = \infty$
by \eqref{equation_psitoinfty},
there exists $b_L > x_0 > 1$ such that
$\psi_L\left( b_L \right) > q\left( b_L \right)$
for each $L > 0$.
By Lemma~\ref{lemma_psi} (a),
$\psi_L$ and $q$ are differentiable at 
every $\kappa \in \left( 1, b_L \right)$
such that $\psi_L(\kappa) \leq q(\kappa)$.
Thus, for each $L$ such that $0 < L < L_0$, 
we can apply Proposition~\ref{proposition_key}
to $\psi_L$ and $q$ on $\left[ 1, b_L \right]$,
so that there exists 
$\kappa_L \in \left( 1, b_L \right) \subset (1,\infty)$
satisfying
$\psi_L\left( \kappa_L \right) \leq q\left( \kappa_L \right)$
and
$\left. \psi_L \!\right.^\prime\!\left( \kappa_L \right)
 =
q^\prime\left( \kappa_L \right)$.
\end{proof}

\begin{lemma}
\label{lemma_1+sqrt2}
Suppose 
$\psi_L\left( \kappa \right) 
\leq q\left( \kappa \right)$
for some $\kappa > 0$
and $L > 0$.
Then $\kappa > 1 + \sqrt{2}$.
\end{lemma}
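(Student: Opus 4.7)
The plan is to exploit a striking arithmetic coincidence: at $\kappa = 1 + \sqrt{2}$, a direct computation gives
\[
q(1+\sqrt{2})
 =
\frac{(\sqrt{2})^2}{(2+\sqrt{2})^2}
 =
\frac{2}{6+4\sqrt{2}}
 =
3 - 2\sqrt{2},
\]
while on the other hand $f(-1) = 3 - \sqrt{8} = 3 - 2\sqrt{2}$ is precisely the minimum value of $f$ on $[-1,1]$, which is the lower bound appearing in \eqref{equation_fminmax}. Thus, the value of $q$ at $1+\sqrt{2}$ exactly matches the floor of $\psi_L / e^{L\kappa}$, and this allows an essentially one-line contradiction.

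The proof I would write proceeds in two steps. First, by Lemma~\ref{lemma_kappa<=1}, any $\kappa > 0$ with $\psi_L(\kappa) \leq q(\kappa)$ already satisfies $\kappa > 1$, so only the interval $(1, 1+\sqrt{2}]$ remains to be ruled out. Second, for $\kappa$ in this range, the monotonicity in Lemma~\ref{lemma_q} gives $q(\kappa) \leq q(1+\sqrt{2}) = 3 - 2\sqrt{2}$, while \eqref{equation_fminmax} combined with $e^{L \kappa} > 1$ (valid since $L > 0$ and $\kappa > 0$) yields
\[
\psi_L(\kappa)
 =
e^{L\kappa} \cdot f(\cos g_L(\kappa))
 \geq
e^{L \kappa} (3 - 2\sqrt{2})
 >
3 - 2\sqrt{2}
 \geq
q(\kappa).
\]
This contradicts $\psi_L(\kappa) \leq q(\kappa)$, forcing $\kappa > 1 + \sqrt{2}$.

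Honestly, there is no serious obstacle once the identity $q(1+\sqrt{2}) = \min_{t \in [-1,1]} f(t)$ is noticed: no derivatives, no appeal to Proposition~\ref{proposition_key}, and no piecewise analysis of $g_L$ are required. The only subtlety worth flagging is that the inequality $e^{L\kappa} > 1$ must be strict (using $L, \kappa > 0$) in order to win even at the endpoint $\kappa = 1 + \sqrt{2}$, where $q(\kappa)$ attains the value $3 - 2\sqrt{2}$ exactly; if one were only allowed $e^{L\kappa} \geq 1$, the conclusion at that endpoint would collapse.
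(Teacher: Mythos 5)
Your proof is correct and takes essentially the same route as the paper: both use Lemma~\ref{lemma_kappa<=1} to reduce to $\kappa > 1$, and both use the lower bound $\psi_L(\kappa) \geq e^{L\kappa}(3-2\sqrt{2}) > 3-2\sqrt{2}$ from~\eqref{equation_fminmax}. The only cosmetic difference is the final step: the paper factors the quadratic $(\kappa-1)^2 - (3-2\sqrt{2})(\kappa+1)^2 > 0$ to read off $\kappa > 1+\sqrt{2}$, while you note the identity $q(1+\sqrt{2}) = 3-2\sqrt{2}$ and invoke the monotonicity of $q$ on $[1,\infty)$ from Lemma~\ref{lemma_q} — algebraically the same fact, presented more transparently.
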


\begin{proof}
For $L > 0$,
the condition 
$\psi_L(\kappa)
 \leq
q(\kappa)$
implies
\[
\frac{\left( \kappa - 1 \right)^2}
{\left( \kappa + 1 \right)^2}
 \geq
e^{L \kappa}
 f\left( \cos{g_L(\kappa)} \right)
 \geq
e^{L \kappa} \left( 3 - 2\sqrt{2} \right)
 >
3 - 2\sqrt{2}
\]
by \eqref{equation_q}, \eqref{equation_psi},
\eqref{equation_fminmax},
and hence
\begin{align*}
0
 &<
\left( \kappa - 1 \right)^2
- 
\left( 3 - 2\sqrt{2} \right) \left( \kappa + 1 \right)^2 \\
 &=
\left( 2\sqrt{2} - 2 \right) \kappa^2
-
2 \left( 4 - 2\sqrt{2} \right) \kappa
+
\left( 2\sqrt{2} - 2 \right) \\
 &=
\left( 2\sqrt{2} - 2 \right)
\left\{
\kappa^2 - 2\sqrt{2} \kappa + 1
\right\} \\
 &=
\left( 2\sqrt{2} - 2 \right)
\left\{ \kappa - \left( \sqrt{2} - 1 \right) \right\}
\left\{ \kappa - \left( \sqrt{2} + 1 \right) \right\}.
\end{align*}
So we have
$\kappa
 <
  \sqrt{2} - 1$
or
$\kappa
 >
  \sqrt{2} + 1$.
It follows that 
$\kappa
 >
  \sqrt{2} + 1$,
since $\kappa > 1$ by Lemma~\ref{lemma_kappa<=1}.
\end{proof}

In view of Lemma~\ref{lemma_nolowerbound}, 
it is legitimate to consider
the behavior of (hypothetical) $\kappa_L$,
as $L \searrow 0$.

\begin{lemma}
\label{lemma_kappatoinfty}
Suppose 
$\psi_L\left( \kappa_L \right) \leq q\left( \kappa_L \right)$
and
$\left. \psi_L \!\right.^\prime\!\left( \kappa_L \right) 
= q^\prime\left( \kappa_L \right)$
with $\kappa_L > 0$.
Then $\lim_{L \to 0+}{\kappa_L} = \infty$.
\end{lemma}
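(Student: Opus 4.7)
The plan is to extract an explicit lower bound of the form $\kappa_L^4 \geq 1 + 4/L$, from which $\lim_{L\to 0+}\kappa_L = \infty$ is immediate. The strategy is to combine the universal one-sided estimate \eqref{equation_psi'minmax} with the matching hypothesis $\psi_L'(\kappa_L) = q'(\kappa_L)$ and the closed-form logarithmic derivative of $q$ to produce a quantitative differential inequality that forces $\kappa_L$ to grow as $L$ shrinks.

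First, by Lemma~\ref{lemma_1+sqrt2} one has $\kappa_L > 1+\sqrt{2} > 1$, so $q(\kappa_L) > 0$ by Lemma~\ref{lemma_q}. Using the lower bound $\sin t/\sqrt{(2-\cos t)^2-1} \geq -1$ from \eqref{equation_psi'minmax} in the formula \eqref{equation_psi'} for $\psi_L'$, together with $\psi_L(\kappa_L) > 0$ from \eqref{equation_psi>0} and $g_L'(\kappa_L) = L + 4/(\kappa_L^2+1)$ from \eqref{equation_g'}, I obtain
\[
\psi_L'(\kappa_L) \leq \psi_L(\kappa_L)\bigl(L + g_L'(\kappa_L)\bigr) = \psi_L(\kappa_L)\Bigl(2L + \tfrac{4}{\kappa_L^2+1}\Bigr).
\]
Substituting the two hypotheses $\psi_L'(\kappa_L) = q'(\kappa_L)$ and $\psi_L(\kappa_L) \leq q(\kappa_L)$ and then dividing by $q(\kappa_L) > 0$ yields
\[
\frac{q'(\kappa_L)}{q(\kappa_L)} \leq 2L + \frac{4}{\kappa_L^2+1}.
\]
From \eqref{equation_q} and \eqref{equation_q'} the left-hand side simplifies to $4/(\kappa_L^2-1)$, and a short algebraic rearrangement reduces the inequality to $8/(\kappa_L^4-1) \leq 2L$, i.e., $\kappa_L^4 \geq 1 + 4/L$. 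Letting $L \to 0+$ closes the argument.

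I do not expect any real obstacle in carrying this out. The essential observation is that the uniform bound \eqref{equation_psi'minmax} is precisely the tool that converts the equality $\psi_L' = q'$ into a quantitative constraint on $\kappa_L$; once Lemma~\ref{lemma_1+sqrt2} ensures $q(\kappa_L)>0$ (so that division is legitimate and no sign flips occur), the rest is bookkeeping with identities already established in the excerpt.
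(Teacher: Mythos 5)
Your proof is correct, and it is genuinely more direct than the argument in the paper. Both proofs begin identically: equality $\left.\psi_L\!\right.^\prime(\kappa_L)=q^\prime(\kappa_L)$, the formula \eqref{equation_psi'}, positivity of $q^\prime$, $\psi_L$, and the comparison $\psi_L(\kappa_L)\le q(\kappa_L)$ lead to
\[
\frac{4}{\kappa_L^2-1}=\frac{q^\prime(\kappa_L)}{q(\kappa_L)}\le L-\frac{\sin g_L(\kappa_L)}{\sqrt{(2-\cos g_L(\kappa_L))^2-1}}\cdot\left. g_L\!\right.^\prime(\kappa_L).
\]
At this point the paper keeps the oscillatory factor intact, isolates it as \eqref{equation_L-}, splits into cases according to the sign of $L-4/(\kappa_L^2-1)$, squares, solves a quadratic in $\cos g_L(\kappa_L)$, and invokes $\cos g_L(\kappa_L)<1$ to extract $\kappa_L^4>1+4/L$ in the nontrivial case. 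You instead apply the uniform bound $-1\le \sin t/\sqrt{(2-\cos t)^2-1}$ from \eqref{equation_psi'minmax} immediately, which replaces the oscillatory factor by its extreme value and collapses the right-hand side to $L+\left.g_L\!\right.^\prime(\kappa_L)=2L+4/(\kappa_L^2+1)$, whence
\[
\frac{4}{\kappa_L^2-1}-\frac{4}{\kappa_L^2+1}=\frac{8}{\kappa_L^4-1}\le 2L,
\]
i.e.\ $\kappa_L^4\ge 1+4/L$. This eliminates the case split, the squaring, and the quadratic-formula computation entirely, while reaching the same quantitative lower bound. Two small remarks: you can replace the appeal to Lemma~\ref{lemma_1+sqrt2} with the weaker Lemma~\ref{lemma_kappa<=1}, since $\kappa_L>1$ already gives $q(\kappa_L)>0$; and your chain uses, correctly but tacitly, that the multiplier $2L+4/(\kappa_L^2+1)$ is positive when passing from $\psi_L(\kappa_L)$ to $q(\kappa_L)$, which is the analogue of the paper's positivity observation for the bracketed term.
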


\begin{proof}
Note first that $\kappa_L > 1$ by Lemma~\ref{lemma_kappa<=1}.
From the assumption
$\left. \psi_L \!\right.^\prime\!\left( \kappa_L \right)
 = q^\prime\left( \kappa_L \right)$
and \eqref{equation_psi'}, 
we have
\[
q^\prime\left( \kappa_L \right)
 =
\left. \psi_L \!\right.^\prime\!\left( \kappa_L \right)
 =
\psi_L\left( \kappa_L \right)
\left\{
L
-
\frac
{\sin{g_L\left( \kappa_L \right)}}
{
\sqrt{
\left( 2 - \cos{g_L\left( \kappa_L \right)} \right)^2 - 1}
}
\cdot
\left. g_L \!\right.^\prime\!\left( \kappa_L \right)
\right\}.
\]
Since $q^\prime\left( \kappa_L \right) > 0$ 
by \eqref{equation_q'}
and
$\psi_L\left( \kappa_L \right) > 0$
by \eqref{equation_psi>0},
we have 
\[
L
-
\frac
{\sin{g_L\left( \kappa_L \right)}}
{
\sqrt{
\left( 2 - \cos{g_L\left( \kappa_L \right)} \right)^2 - 1}
}
\cdot
\left. g_L \!\right.^\prime\!\left( \kappa_L \right)
 >
0,
\]
and hence
\[
q^\prime\left( \kappa_L \right)
 \leq
q\left( \kappa_L \right)
\left\{
L
-
\frac
{\sin{g_L\left( \kappa_L \right)}}
{
\sqrt{
\left( 2 - \cos{g_L\left( \kappa_L \right)} \right)^2 - 1}
}
\cdot
\left. g_L \!\right.^\prime\!\left( \kappa_L \right)
\right\}
\]
by the assumption $\psi_L\left( \kappa_L \right)
 \leq q\left( \kappa_L \right)$.
So by \eqref{equation_q}, \eqref{equation_q'}, we have 
\begin{align*}
\frac{4}{\kappa_L^2 - 1}
 &=
\frac{q^\prime\left( \kappa_L \right)}
{q\left( \kappa_L \right)}
 \leq
L
-
\frac
{\sin{g_L\left( \kappa_L \right)}}
{
\sqrt{
\left( 2 - \cos{g_L\left( \kappa_L \right)} \right)^2 - 1}
}
\cdot
\left. g_L \!\right.^\prime\!\left( \kappa_L \right),
\end{align*}
and hence
\begin{equation}
\label{equation_L-}
\left. g_L \!\right.^\prime\!\left( \kappa_L \right) 
\sin{g_L\left( \kappa_L \right)}
 \leq
\left(
L
-
\frac{4}{\kappa_L^2 - 1}
\right)
\sqrt{\left( 2 - \cos{g_L\left( \kappa_L \right)} \right)^2 - 1}.
\end{equation}

If
$
L
-
\frac{4}{\kappa_L^2 - 1}
 \geq
0
$,
which is equivalent to
$
\kappa_L
 \geq
\sqrt{1 + \frac{4}{L}}
$,
then
$\lim_{L \to 0+}{\kappa_L}
 \geq
\lim_{L \to 0+}\sqrt{1 + \frac{4}{L}}
 =
\infty$,
and hence we have
$\lim_{L \to 0+}{\kappa_L}
 =
\infty$.
So we assume
$
L
-
\frac{4}{\kappa^2 - 1}
 <
0$
for the rest of the proof.
Then
the right side,
and hence the left side as well, of \eqref{equation_L-}
becomes negative.
By squaring the both nonnegative sides of
\[
-
\left. g_L \!\right.^\prime\!\left( \kappa_L \right)
\sin{g_L\left( \kappa_L \right)}
 \geq
-
\left(
L
-
\frac{4}{\kappa_L^2 - 1}
\right)
\sqrt{\left( 2 - \cos{g_L\left( \kappa_L \right)} \right)^2 - 1},
\]
we have
\begin{multline*}
\left\{ 
\left. g_L \!\right.^\prime\!\left( \kappa_L \right) 
\right\}^2
\left( 1 - \cos^2{g_L\left( \kappa_L \right)} \right)
 \geq
\left(
L
-
\frac{4}{\kappa_L^2 - 1}
\right)^2
\left\{
\left( 2 - \cos{g_L\left( \kappa_L \right)} \right)^2 - 1
\right\} \\
 =
\left(
L
-
\frac{4}{\kappa_L^2 - 1}
\right)^2
\left\{
\cos^2{g_L\left( \kappa_L \right)} 
- 
4\cos{g_L\left( \kappa_L \right)} + 3
\right\},
\end{multline*}
and hence
\begin{multline*}
0
 \geq
\left\{
\left\{ 
\left. g_L \!\right.^\prime\!\left( \kappa_L \right) 
\right\}^2 
+
\left(
L
-
\frac{4}{\kappa_L^2 - 1}
\right)^2
\right\}
\cos^2{g_L\left( \kappa_L \right)} \\
-
4
\left(
L
-
\frac{4}{\kappa_L^2 - 1}
\right)^2
\cos{g_L\left( \kappa_L \right)}
+
\left\{
3
\left(
L
-
\frac{4}{\kappa_L^2 - 1}
\right)^2
-
\left\{ 
\left. g_L \!\right.^\prime\!\left( \kappa_L \right) 
\right\}^2 
\right\}.
\end{multline*}
So we have 
$\alpha
 \leq
\cos{g_L\left( \kappa_L \right)}
 \leq
\beta$,
where
$\alpha$, $\beta$ are (interchangeably)
\begin{align*}
\lefteqn{
\frac
{
1
}
{
\left\{ 
\left. g_L \!\right.^\prime\!\left( \kappa_L \right) 
\right\}^2 
+
\left(
L
-
\frac{4}{\kappa_L^2 - 1}
\right)^2
}
\left[
2
\left(
L
-
\frac{4}{\kappa_L^2 - 1}
\right)^2
\pm
\left\{
4
\left(
L
-
\frac{4}{\kappa_L^2 - 1}
\right)^4
\right.
\right.
} \\
 &
\left.
\left.
-
\left\{
\left\{ 
\left. g_L \!\right.^\prime\!\left( \kappa_L \right) 
\right\}^2 
+
\left(
L
-
\frac{4}{\kappa_L^2 - 1}
\right)^2
\right\}
\left\{
3
\left(
L
-
\frac{4}{\kappa_L^2 - 1}
\right)^2
-
\left\{ 
\left. g_L \!\right.^\prime\!\left( \kappa_L \right) 
\right\}^2 
\right\}
\right\}^{\frac{1}{2}}
\right]
 \\
 &=
\frac
{
2
\left(
L
-
\frac{4}{\kappa_L^2 - 1}
\right)^2
\pm
\left|
{
\left\{ 
\left. g_L \!\right.^\prime\!\left( \kappa_L \right) 
\right\}^2 
-
\left(
L
-
\frac{4}{\kappa_L^2 - 1}
\right)^2
}
\right|
}
{
\left\{ 
\left. g_L \!\right.^\prime\!\left( \kappa_L \right) 
\right\}^2 
+
\left(
L
-
\frac{4}{\kappa_L^2 - 1}
\right)^2
} \\
 &=
1,
\quad
\frac
{
-
\left\{ 
\left. g_L \!\right.^\prime\!\left( \kappa_L \right) 
\right\}^2 
+
3
\left(
L
-
\frac{4}{\kappa_L^2 - 1}
\right)^2
}
{
\left\{ 
\left. g_L \!\right.^\prime\!\left( \kappa_L \right) 
\right\}^2 
+
\left(
L
-
\frac{4}{\kappa_L^2 - 1}
\right)^2
}.
\end{align*}
Note that 
$\cos{g_L\left( \kappa_L \right)} < 1$
by Lemma~\ref{lemma_psi} (a) and its proof.
Thus we must have
\[
\frac
{
-
\left\{ 
\left. g_L \!\right.^\prime\!\left( \kappa_L \right) 
\right\}^2 
+
3
\left(
L
-
\frac{4}{\kappa_L^2 - 1}
\right)^2
}
{
\left\{ 
\left. g_L \!\right.^\prime\!\left( \kappa_L \right) 
\right\}^2 
+
\left(
L
-
\frac{4}{\kappa_L^2 - 1}
\right)^2
}
 <
1,
\]
which is equivalent to
\[
\left(
L
-
\frac{4}{\kappa_L^2 - 1}
\right)^2
 <
\left\{ 
\left. g_L \!\right.^\prime\!\left( \kappa_L \right) 
\right\}^2
 =
\left(
L
+
\frac{4}{\kappa_L^2 + 1}
\right)^2
\]
by \eqref{equation_g'}.
Since we assumed that
$L - 4/\left( \kappa_L^2 - 1 \right) < 0$,
we have
\[
-\left(
L
-
\frac{4}{\kappa_L^2 - 1}
\right)
 <
L
+
\frac{4}{\kappa_L^2 + 1},
\]
and hence
\[
L
 >
\frac{1}{2}
\left(
\frac{4}{\kappa_L^2 - 1}
-
\frac{4}{\kappa_L^2 + 1}
\right)
 =
\frac{4}{\kappa_L^4 - 1},
\]
which is equivalent to
$
\kappa_L
 >
\sqrt[4]{
1 + \frac{4}{L}
}$.
So
$
\lim_{L \to 0+}{\kappa_L}
 \geq
\lim_{L \to 0+}
\sqrt[4]{
1 + \frac{4}{L}
}
 =
\infty$.
Thus we have
$\lim_{L \to 0+}{\kappa_L} = \infty$,
and the proof is complete.
\end{proof}

\begin{lemma}
\label{lemma_Lkappa}
Suppose 
$\psi_L\left( \kappa_L \right) \leq q\left( \kappa_L \right)$
and
$\left. \psi_L \!\right.^\prime\!\left( \kappa_L \right) 
= q^\prime\left( \kappa_L \right)$
with $\kappa_L > 0$.
Then
$g_L\left( \kappa_L \right) < 2\pi$
and
$\lim_{L \to 0+}{g_L\left( \kappa_L \right)} = 2\pi$.
\end{lemma}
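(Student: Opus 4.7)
The plan is to first obtain the closed form $\hat g(\kappa) = -4\arctan\kappa$—which follows from \eqref{equation_ghat'} by integration with $\hat g(0) = 0$, since $\hat g$ is continuous and differentiable on $[0,\infty)$—so that
\[
g_L(\kappa) = L\kappa + 4\arctan\kappa = L\kappa + 2\pi - 4\arctan(1/\kappa)
\quad
\text{for } \kappa > 0.
\]
In particular, $g_L(\kappa_L) < 2\pi$ becomes $L\kappa_L < 4\arctan(1/\kappa_L)$. Three easy bounds then confine $g_L(\kappa_L)$ to a narrow window. From \eqref{equation_fminmax} and $\psi_L(\kappa_L) \leq q(\kappa_L) < 1$ we get $L\kappa_L < \ln(3+2\sqrt 2) < \pi$; Lemma~\ref{lemma_1+sqrt2} combined with $\arctan(1+\sqrt 2) = 3\pi/8$ gives $4\arctan\kappa_L > 3\pi/2$; together they yield $g_L(\kappa_L) \in (3\pi/2, 3\pi)$. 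Additionally, $\cos g_L(\kappa_L) < 1$ by the argument in the proof of Lemma~\ref{lemma_psi}(a).

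The main step is to rule out $L(\kappa_L^2 - 1) \geq 4$. Suppose this held. Then $L\kappa_L \geq (L+4)/\kappa_L > 4\arctan(1/\kappa_L)$ (using $\arctan x < x$ for $x > 0$), so $g_L(\kappa_L) = 2\pi + \theta$ with $\theta := L\kappa_L - 4\arctan(1/\kappa_L) \in (0, \pi)$. Next, using \eqref{equation_f'}, one computes
\[
\frac{d}{d\theta}\bigl[\ln f(\cos\theta) + \theta\bigr] = 1 - \frac{\sin\theta}{\sqrt{(2-\cos\theta)^2 - 1}},
\]
which is $\geq 0$ by \eqref{equation_psi'minmax}. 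Since the bracketed expression vanishes at $\theta = 0$, we obtain $f(\cos\theta) \geq e^{-\theta}$ for $\theta \in [0,\pi]$. Hence
\[
\psi_L(\kappa_L) = e^{L\kappa_L} f(\cos\theta) \geq e^{L\kappa_L - \theta} = e^{4\arctan(1/\kappa_L)} > 1,
\]
contradicting $\psi_L(\kappa_L) \leq q(\kappa_L) < 1$. So $a := L - 4/(\kappa_L^2 - 1) < 0$, and the inequality $g_L'(\kappa_L)\sin g_L(\kappa_L) \leq a\sqrt{(2-\cos g_L(\kappa_L))^2 - 1}$ from the proof of Lemma~\ref{lemma_kappatoinfty}, combined with $g_L'(\kappa_L) > 0$ from \eqref{equation_g'}, forces $\sin g_L(\kappa_L) \leq 0$. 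Within $(3\pi/2, 3\pi)$ this restricts $g_L(\kappa_L)$ to $(3\pi/2, 2\pi]$, and the endpoint is excluded by $\cos g_L(\kappa_L) < 1$, giving $g_L(\kappa_L) < 2\pi$.

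The limit follows by a squeeze: the already proved $g_L(\kappa_L) < 2\pi$ reads $L\kappa_L < 4\arctan(1/\kappa_L)$, whose right side tends to $0$ as $L \to 0+$ because $\kappa_L \to \infty$ by Lemma~\ref{lemma_kappatoinfty}; simultaneously $g_L(\kappa_L) \geq 4\arctan\kappa_L \to 2\pi$. Therefore $g_L(\kappa_L) \to 2\pi$ as $L \to 0+$. The main obstacle is the middle step: ruling out $L(\kappa_L^2 - 1) \geq 4$ relies on the sharp pointwise bound $f(\cos\theta) \geq e^{-\theta}$ for $\theta \in [0,\pi]$, after which the sign analysis on $(3\pi/2, 3\pi)$ and the squeeze argument for the limit are routine.
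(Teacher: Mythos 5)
Your proof is correct, and it takes a genuinely different route from the paper's. The paper establishes $g_L(\kappa_L)<2\pi$ by bounding $1/f(\cos g_L(\kappa_L))$ from above using the Taylor-type inequalities $\cos t\geq 1-(t-2\pi)^2/2$ and $\sqrt{1+x^2/4}\leq 1+x^2/8$, bounding $e^{L\kappa_L}$ from below by the cubic Taylor polynomial, and then showing the resulting cubic inequality in $L\kappa_L$ has all positive coefficients, forcing a contradiction. Your argument instead rests on two clean structural observations that the paper never states explicitly: the closed form $\hat{g}(\kappa)=-4\arctan\kappa$ (so $g_L(\kappa)=L\kappa+2\pi-4\arctan(1/\kappa)$ and $g_L(\kappa_L)<2\pi$ reduces to $L\kappa_L<4\arctan(1/\kappa_L)$), and the sharp pointwise bound $f(\cos\theta)\geq e^{-\theta}$ on $[0,\pi]$, obtained by integrating the differential inequality
\[
\frac{d}{d\theta}\bigl[\ln f(\cos\theta)+\theta\bigr]
=1-\frac{\sin\theta}{\sqrt{(2-\cos\theta)^2-1}}\geq 0
\]
from $\theta=0$. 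This replaces the paper's polynomial bookkeeping by a single monotonicity argument and is, in my view, more conceptual. The squeeze for the limit ($4\arctan\kappa_L\leq g_L(\kappa_L)<2\pi$ with $\kappa_L\to\infty$) is essentially equivalent to the paper's.

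One remark on economy: your detour through the case $L(\kappa_L^2-1)\geq 4$ followed by the sign analysis via \eqref{equation_L-} is more than you need. Having established $3\pi/2<g_L(\kappa_L)<3\pi$ and $\cos g_L(\kappa_L)<1$, you can argue directly: if $g_L(\kappa_L)\geq 2\pi$, then $g_L(\kappa_L)>2\pi$ (since $\cos g_L(\kappa_L)\neq 1$), so $\theta:=g_L(\kappa_L)-2\pi=L\kappa_L-4\arctan(1/\kappa_L)\in(0,\pi)$, and the bound $f(\cos\theta)\geq e^{-\theta}$ gives $\psi_L(\kappa_L)\geq e^{L\kappa_L-\theta}=e^{4\arctan(1/\kappa_L)}>1>q(\kappa_L)$, a contradiction. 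This dispenses with the appeal to \eqref{equation_L-} (and hence with the second hypothesis $\psi_L'(\kappa_L)=q'(\kappa_L)$, which is only genuinely needed via Lemma~\ref{lemma_kappatoinfty} for the limit). Your version as written is nevertheless valid, since \eqref{equation_L-} does follow from the stated hypotheses.
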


\begin{proof}
From the assumption
$\psi_L\left( \kappa_L \right)
 =
e^{L \kappa_L}
\cdot
f\left( \cos{g_L\left( \kappa_L \right)} \right)
 \leq q\left( \kappa_L \right)$,
we have
\begin{align*}
\frac
{e^{L \kappa_L}}
{q\left( \kappa_L \right)}
 &\leq
\frac
{1}
{f\left( \cos{g_L\left( \kappa_L \right)} \right)}
 =
\frac{1}
{
2 - \cos{g_L\left( \kappa_L \right)} 
- 
\sqrt{\left( 2 - \cos{g_L\left( \kappa_L \right)} \right)^2 - 1}
} \\
 &=
2 - \cos{g_L\left( \kappa_L \right)}
+ 
\sqrt{\left( 2 - \cos{g_L\left( \kappa_L \right)} \right)^2 - 1}.
\end{align*}
Since $\cos{t} = \cos(t - 2\pi) \geq 1 - (t - 2\pi)^2/2$,
we have
$2 - \cos{t}
 \leq 
2 - \left\{ 1 - (t - 2\pi)^2/2 \right\}
 =
1 + (t - 2\pi)^2/2$,
and hence
\begin{align*}
2 - \cos{t} + \sqrt{\left( 2 - \cos{t} \right)^2 - 1}
 &\leq
1 + \frac{(t - 2\pi)^2}{2}
+
\sqrt
{
\left\{ 1 + \frac{(t - 2\pi)^2}{2} \right\}^2 - 1
} \\
 &=
1 + \frac{(t - 2\pi)^2}{2}
+
\sqrt
{
(t - 2\pi)^2
+
\frac{(t - 2\pi)^4}{4}
} \\
 &=
1 + \frac{(t - 2\pi)^2}{2}
+
|t - 2\pi|
\sqrt
{
1
+
\frac{(t - 2\pi)^2}{4}
} \\
 &\leq
1 + \frac{(t - 2\pi)^2}{2}
+
|t - 2\pi|
\left\{
1
+
\frac{(t - 2\pi)^2}{8}
\right\} \\
 &=
1
+
|t - 2\pi|
+
\frac{|t - 2\pi|^2}{2}
+
\frac{|t - 2\pi|^3}{8}
\end{align*}
for every $t \in \mathbb{R}$,
where we used the inequality
$\sqrt{1 + x^2/4} \leq 1 + x^2/8$
for the second inequality.
So we have
\[
\frac
{e^{L \kappa_L}}
{q\left( \kappa_L \right)}
 \leq
1
+
\left| g_L \left( \kappa_L \right) - 2\pi \right|
+
\frac{1}{2}
\left| g_L \left( \kappa_L \right) - 2\pi \right|^2
+
\frac{1}{8}
\left| g_L \left( \kappa_L \right) - 2\pi \right|^3.
\]
Note that, 
since $\kappa_L > 1 + \sqrt{2}$ 
by Lemma~\ref{lemma_1+sqrt2},
\begin{equation}
\label{equation_gLkappaL}
g_L \left( \kappa_L \right) - 2\pi
 = 
L \kappa_L - \hat{g}\left( \kappa_L \right) - 2\pi
 =
L \kappa_L 
- 
\arctan
{
\frac
{4 \kappa_L \left(\kappa_L^2 - 1 \right)}
{\kappa_L^4- 6 \kappa_L^2 + 1}
}
\end{equation}
by \eqref{equation_g}
and \eqref{equation_ghat}.
So from the inequality
$e^x > 1 + x + \frac{x^2}{2} + \frac{x^3}{6}$
for $x > 0$,
we have
\begin{align*}
\lefteqn{
\frac{1}{q\left( \kappa_L \right)}
\left\{
1
+
L \kappa_L
+
\frac{1}{2}
\left( L \kappa_L \right)^2
+
\frac{1}{6}
\left( L \kappa_L \right)^3
\right\}
} \\
 &
\quad <
1
+
\left| 
L \kappa_L
-
\arctan
{
\frac
{4 \kappa_L \left( \kappa_L^2 - 1 \right)}
{\kappa_L^4- 6 \kappa_L^2 + 1}
}
\right|
+
\frac{1}{2}
\left| 
L \kappa_L
-
\arctan
{
\frac
{4 \kappa_L \left( \kappa_L^2 - 1 \right)}
{\kappa_L^4- 6 \kappa_L^2 + 1}
}
\right|^2 \\
 &
\qquad
+
\frac{1}{8}
\left| 
L \kappa_L
-
\arctan
{
\frac
{4 \kappa_L \left( \kappa_L^2 - 1 \right)}
{\kappa_L^4- 6 \kappa_L^2 + 1}
}
\right|^3,
\end{align*}
or equivalently,
\begin{align}
\lefteqn{
24
+
24
L \kappa_L
+
12
\left( L \kappa_L \right)^2
+
4
\left( L \kappa_L \right)^3
}
\nonumber \\
 &
\qquad <
24
q\left( \kappa_L \right)
+
24
q\left( \kappa_L \right)
\left| 
L \kappa_L
-
\arctan
{
\frac
{4 \kappa_L \left( \kappa_L^2 - 1 \right)}
{\kappa_L^4- 6 \kappa_L^2 + 1}
}
\right|
\nonumber \\
 &
\qquad\qquad
+
12
q\left( \kappa_L \right)
\left| 
L \kappa_L
-
\arctan
{
\frac
{4 \kappa_L \left( \kappa_L^2 - 1 \right)}
{\kappa_L^4- 6 \kappa_L^2 + 1}
}
\right|^2
\nonumber \\
 &
\qquad\qquad
+
3
q\left( \kappa_L \right)
\left| 
L \kappa_L
-
\arctan
{
\frac
{4 \kappa_L \left( \kappa_L^2 - 1 \right)}
{\kappa_L^4- 6 \kappa_L^2 + 1}
}
\right|^3.
\label{equation_Lkappa}
\end{align}

Suppose 
\[
L \kappa_L
 \geq
\arctan
{
\frac
{4 \kappa_L \left( \kappa_L^2 - 1 \right)}
{\kappa_L^4- 6 \kappa_L^2 + 1}.
}
\]
Then \eqref{equation_Lkappa} becomes
\begin{align*}
0
 &>
\left\{
4
-
3
q\left( \kappa_L \right)
\right\}
\left( L \kappa_L \right)^3 \\
 &
\quad
+
\left\{
12
+
9
q\left( \kappa_L \right)
\arctan
{
\frac
{4 \kappa_L \left( \kappa_L^2 - 1 \right)}
{\kappa_L^4- 6 \kappa_L^2 + 1}
}
-
12
q\left( \kappa_L \right)
\right\}
\left( L \kappa_L \right)^2 \\
 &
\quad
+
\left\{
24
-
9
q\left( \kappa_L \right)
\arctan^2
{
\frac
{4 \kappa_L \left( \kappa_L^2 - 1 \right)}
{\kappa_L^4- 6 \kappa_L^2 + 1}
}
\right. \\
 &
\quad\qquad\qquad
\left.
+
24
q\left( \kappa_L \right)
\arctan
{
\frac
{4 \kappa_L \left( \kappa_L^2 - 1 \right)}
{\kappa_L^4- 6 \kappa_L^2 + 1}
}
-
24
q\left( \kappa_L \right)
\right\}
L \kappa_L \\
 &
\quad
+
\left\{
24
+
3
q\left( \kappa_L \right)
\arctan^3
{
\frac
{4 \kappa_L \left( \kappa_L^2 - 1 \right)}
{\kappa_L^4- 6 \kappa_L^2 + 1}
}
-
12
q\left( \kappa_L \right)
\arctan^2
{
\frac
{4 \kappa_L \left( \kappa_L^2 - 1 \right)}
{\kappa_L^4- 6 \kappa_L^2 + 1}
}
\right. \\
 &
\quad\qquad\qquad
\left.
+
24
q\left( \kappa_L \right)
\arctan
{
\frac
{4 \kappa_L \left( \kappa_L^2 - 1 \right)}
{\kappa_L^4- 6 \kappa_L^2 + 1}
}
-
24
q\left( \kappa_L \right)
\right\},
\end{align*}
and hence
\begin{equation}
\label{equation_Lkappaabc}
\left( L \kappa_L \right)^3
+
a
\left( L \kappa_L \right)^2
+
b
L \kappa_L
+
c
 <
0,
\end{equation}
where
\begin{align*}
a
 &=
\frac
{12 \left\{ 1 - q\left( \kappa_L \right) \right\}}
{4 - 3 q\left( \kappa_L \right)}
+
\frac
{9 q\left( \kappa_L \right)}
{4 - 3 q\left( \kappa_L \right)}
\arctan
{
\frac
{4 \kappa_L \left( \kappa_L^2 - 1 \right)}
{\kappa_L^4- 6 \kappa_L^2 + 1},
}
 \\
b
 &=
\frac
{24 \left\{ 1 - q\left( \kappa_L \right) \right\}}
{4 - 3 q\left( \kappa_L \right)}
-
\frac
{
q\left( \kappa_L \right)
}
{4 - 3 q\left( \kappa_L \right)}
\arctan
{
\frac
{4 \kappa_L \left( \kappa_L^2 - 1 \right)}
{\kappa_L^4- 6 \kappa_L^2 + 1}
}
\cdot \\
 &
\qquad
\cdot
\left\{
9
\arctan
{
\frac
{4 \kappa_L \left( \kappa_L^2 - 1 \right)}
{\kappa_L^4- 6 \kappa_L^2 + 1}
}
-
24
\right\}, \\
c
 &=
\frac
{24 \left\{ 1 - q\left( \kappa_L \right) \right\}}
{4 - 3 q\left( \kappa_L \right)}
+
\frac
{
3
q\left( \kappa_L \right)
}
{4 - 3 q\left( \kappa_L \right)}
\arctan
{
\frac
{4 \kappa_L \left( \kappa_L^2 - 1 \right)}
{\kappa_L^4- 6 \kappa_L^2 + 1}
}
\cdot \\
 &
\qquad
\cdot
\left\{
\arctan^2
{
\frac
{4 \kappa_L \left( \kappa_L^2 - 1 \right)}
{\kappa_L^4- 6 \kappa_L^2 + 1}
}
-
4
\arctan
{
\frac
{4 \kappa_L \left( \kappa_L^2 - 1 \right)}
{\kappa_L^4- 6 \kappa_L^2 + 1}
}
+
8
\right\}. \\
\end{align*}
Since $\kappa_L > 1 + \sqrt{2}$
and
\begin{align*}
\kappa^4 - 6 \kappa^2 + 1
 &=
\left( \kappa^2 - 1 \right)^2 - 4 \kappa^2
 =
\left( \kappa^2 + 2 \kappa - 1 \right)
\left( \kappa^2 - 2 \kappa - 1 \right) \\
 &=
\left( \kappa + 1 + \sqrt{2} \right)
\left( \kappa + 1 - \sqrt{2} \right)
\left( \kappa - 1 + \sqrt{2} \right)
\left( \kappa - 1 - \sqrt{2} \right),
\end{align*}
we have
$
4 \kappa_L \left( \kappa_L^2 - 1 \right)/
\left( \kappa_L^4- 6 \kappa_L^2 + 1 \right)
 >
0$,
and hence
\[
0
 <
\arctan
{
\frac
{4 \kappa_L \left( \kappa_L^2 - 1 \right)}
{\kappa_L^4- 6 \kappa_L^2 + 1}
}
 <
\frac{\pi}{2}
 \approx
1.5708.
\]
Again since $\kappa_L > 1 + \sqrt{2}$,
we have $0 < q\left( \kappa_L \right) < 1$ by Lemma~\ref{lemma_q},
and hence
\[
\frac
{1 - q\left( \kappa_L \right)}
{4 - 3 q\left( \kappa_L \right)}
 >
0,
\qquad
\frac
{q\left( \kappa_L \right)}
{4 - 3 q\left( \kappa_L \right)}
 >
0.
\]
It follows that $a, b, c > 0$,
which is a contradiction to \eqref{equation_Lkappaabc}
since $L \kappa_L > 0$.
Hence we have
\begin{equation}
\label{equation_Lkappaarctan}
L \kappa_L
 <
\arctan
{
\frac
{4 \kappa_L \left( \kappa_L^2 - 1 \right)}
{\kappa_L^4- 6 \kappa_L^2 + 1}
}.
\end{equation}

By \eqref{equation_gLkappaL} and \eqref{equation_Lkappaarctan},
we have
\[
g_L\left( \kappa_L \right)
 =
L \kappa_L 
-
\arctan
{
\frac
{4 \kappa_L \left( \kappa_L^2 - 1 \right)}
{\kappa_L^4- 6 \kappa_L^2 + 1}
}
+ 
2\pi
 <
2\pi.
\]
Since $L \kappa_L > 0$,
we have
\[
-
\arctan
{
\frac
{4 \kappa_L \left( \kappa_L^2 - 1 \right)}
{\kappa_L^4- 6 \kappa_L^2 + 1}
}
 <
L \kappa_L
-
\arctan
{
\frac
{4 \kappa_L \left( \kappa_L^2 - 1 \right)}
{\kappa_L^4- 6 \kappa_L^2 + 1}
}
 <
0
\]
by \eqref{equation_Lkappaarctan}.
So by Lemma~\ref{lemma_kappatoinfty},
\begin{align*}
0
 &\geq
\lim_{L \to 0+}
\left\{
L \kappa_L
-
\arctan
{
\frac
{4 \kappa_L \left( \kappa_L^2 - 1 \right)}
{\kappa_L^4- 6 \kappa_L^2 + 1}
}
\right\}
 \geq
-
\lim_{L \to 0+}
\arctan
{
\frac
{4 \kappa_L \left( \kappa_L^2 - 1 \right)}
{\kappa_L^4- 6 \kappa_L^2 + 1}
} \\
 &=
-
\lim_{\kappa_L \to \infty}
\arctan
{
\frac
{4 \kappa_L \left( \kappa_L^2 - 1 \right)}
{\kappa_L^4- 6 \kappa_L^2 + 1}
}
 =
0,
\end{align*}
and hence we have
\[
\lim_{L \to 0+}
\left\{
L \kappa_L
-
\arctan
{
\frac
{4 \kappa_L \left( \kappa_L^2 - 1 \right)}
{\kappa_L^4- 6 \kappa_L^2 + 1}
}
\right\}
 =
0.
\]
Thus by \eqref{equation_gLkappaL} again,
we have
\[
\lim_{L \to 0+}{g_L\left( \kappa_L \right)}
 =
\lim_{L \to 0+}
\left\{
L \kappa_L
-
\lim_{L \to 0+}
\arctan
{
\frac
{4 \kappa_L \left( \kappa_L^2 - 1 \right)}
{\kappa_L^4- 6 \kappa_L^2 + 1}
}
\right\}
+
2\pi
 =
2\pi,
\]
which completes the proof.
\end{proof}

Lemma~\ref{lemma_Lkappa}
indicates that
it is enough to consider the case
when $g_L(\kappa) < 2\pi$
to prove \eqref{equation_psi>q}.
We will do the change of the variables
from $\kappa$ to $t$ via
$t = g_L(\kappa)$ for $\kappa \geq 0$,
or equivalently,
$\kappa = g_L^{-1}(t)$ for $t \geq 0$.

\begin{lemma}
\label{lemma_g-1}
Suppose $0 < t < 2\pi$.
Then $\lim_{L \to 0+}{g_L^{-1}(t)} = \hat{g}^{-1}(-t)$,
and
$g_L^{-1}(t) < \hat{g}^{-1}(-t)$ for every $L > 0$.
\end{lemma}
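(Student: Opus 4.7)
The plan rests on the explicit formula $g_L(\kappa) = L\kappa - \hat{g}(\kappa)$ from \eqref{equation_g}, which lets me compare $g_L^{-1}(t)$ with $\hat{g}^{-1}(-t)$ directly. Since $\hat{g}$ is continuous and strictly decreasing from $\hat{g}(0)=0$ to $\lim_{\kappa \to \infty}\hat{g}(\kappa) = -2\pi$, and $-t \in (-2\pi,0)$ for $t \in (0,2\pi)$, the value $\hat{g}^{-1}(-t)$ is a well-defined positive number, and $\hat{g}^{-1}$ itself is continuous and strictly decreasing on $(-2\pi,0]$.

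First I would establish the strict inequality. Set $\kappa_L := g_L^{-1}(t)$, which is positive because $g_L(0) = 0 < t$. The defining relation $L\kappa_L - \hat{g}(\kappa_L) = t$ rearranges to $\hat{g}(\kappa_L) = L\kappa_L - t > -t$, where the last inequality uses $L\kappa_L > 0$. Applying the strictly decreasing inverse $\hat{g}^{-1}$ reverses the inequality to $\kappa_L < \hat{g}^{-1}(-t)$, exactly as claimed.

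For the limit, I would exploit this very bound. The inequality $\kappa_L < \hat{g}^{-1}(-t)$ shows that $\{\kappa_L\}_{L>0}$ is bounded above by a constant that is independent of $L$, so $L\kappa_L \to 0$ as $L \to 0+$. Consequently $\hat{g}(\kappa_L) = L\kappa_L - t \to -t$, and the continuity of $\hat{g}^{-1}$ on $(-2\pi,0]$ yields $\kappa_L \to \hat{g}^{-1}(-t)$. I do not anticipate any serious obstacle here: the only subtlety is justifying boundedness of $\kappa_L$ in order to push $L\kappa_L$ to zero, and this is precisely what the first half of the lemma provides, so the natural order is to prove the inequality first and the limit second.
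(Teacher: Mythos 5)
Your proof is correct and takes a genuinely different, and in fact more economical, route than the paper's. The paper differentiates $g_L^{-1}(t)$ with respect to $L$ to show it is strictly decreasing in $L$, argues by contradiction (using $\hat{g}(\kappa)\to -2\pi$) that $\lim_{L\to 0+}g_L^{-1}(t)$ is finite, passes to the limit in the defining relation to identify that limit as $\hat{g}^{-1}(-t)$, and only then deduces the strict inequality from the monotonicity in $L$. You invert the order and shortcut the whole thing: the defining relation $L\kappa_L-\hat{g}(\kappa_L)=t$ plus $L\kappa_L>0$ gives $\hat{g}(\kappa_L)>-t$ in one line, and applying the strictly decreasing $\hat{g}^{-1}$ yields the inequality directly, with no calculus in $L$ and no contradiction argument. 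The boundedness needed for the limit then falls out for free, and continuity of $\hat{g}^{-1}$ finishes it. What the paper's approach buys is a slightly stronger structural fact — strict monotonicity of $g_L^{-1}(t)$ in $L$, not just the bound — but the lemma as stated does not need that, so your argument is the leaner one for the stated claim.
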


\begin{proof}
From the definition \eqref{equation_g} of $g_L$,
we have
\begin{equation}
\label{equation_g-1}
L \cdot g_L^{-1}(t) - \hat{g}\left( g_L^{-1}(t) \right) = t,
\end{equation}
Differentiating with respect to $L$,
we have
\[
1 \cdot g_L^{-1}(t) 
+ 
L \cdot \frac{\partial}{\partial L} g_L^{-1}(t)
-
\hat{g}^\prime\left( g_L^{-1}(t) \right)
\cdot
\frac{\partial}{\partial L} g_L^{-1}(t)
 =
0,
\]
and hence by \eqref{equation_g} and \eqref{equation_g'},
\[
\frac{\partial}{\partial L} g_L^{-1}(t)
 =
-
\frac
{g_L^{-1}(t)}
{L - \hat{g}^\prime\left( g_L^{-1}(t) \right)}
 =
-
\frac{\kappa}{L - \hat{g}^\prime(\kappa)}
 =
-
\frac{\kappa}{\left. g_L \!\right.^\prime\!(\kappa)}
 <
0,
\]
where we put
$\kappa = g_L^{-1}(t)$.
This shows that
$g_L^{-1}(t)$ is strictly decreasing with respect to $L$
for any fixed $t$,
and consequently, $g_L^{-1}(t)$ is strictly increasing
as $L \searrow 0$.

Suppose $0 < t < 2\pi$.
If $\lim_{L \to 0+}{g_L^{-1}(t)} = \infty$,
then by \eqref{equation_ghat} and \eqref{equation_g-1}, we have
\begin{align*}
2\pi
 >
t
 &=
\lim_{L \to 0+}\left\{ L \cdot g_L^{-1}(t) \right\}
-
\lim_{L \to 0+}
\left\{
\hat{g}\left( g_L^{-1}(t) \right)
\right\} \\
 &=
\lim_{L \to 0+}\left\{ L \cdot g_L^{-1}(t) \right\}
-
\lim_{\kappa \to \infty}
\left\{
\hat{g}\left( \kappa \right)
\right\} \\
 &=
\lim_{L \to 0+}\left\{ L \cdot g_L^{-1}(t) \right\}
 -
(-2\pi)
 \geq
2\pi,
\end{align*}
which is a contradiction.
So $\lim_{L \to 0+}{g_L^{-1}(t)} < \infty$. 
Note from \eqref{equation_g-1} again that
\[
t
 =
\lim_{L \to 0+}{L} 
\cdot 
\lim_{L \to 0+}{g_L^{-1}(t)}
-
\lim_{L \to 0+}
\left\{
\hat{g}\left( g_L^{-1}(t) \right)
\right\}
 =
0
-
\hat{g}
\left(
\lim_{L \to 0+}{g_L^{-1}(t)}
\right),
\]
from which it follows that
$
\lim_{L \to 0+}{g_L^{-1}(t)}
 =
\hat{g}^{-1}(-t)
$.
Since $g_L^{-1}(t)$ is strictly decreasing with respect to $L$,
we have 
$g_L^{-1}(t) < \hat{g}^{-1}(-t)$ for every $L > 0$.
\end{proof}

We remark that, in fact, 
$\lim_{L \to 0+}{g_L^{-1}(t)} = \infty$
for every $t \geq 2\pi$,
whose proof we omit.
For $t \geq 0$,
define
\[
\tilde{\psi}_L(t)
 =
\psi_L\left( g_L^{-1}(t) \right),
\qquad
\tilde{q}_L(t)
 =
q\left( g_L^{-1}(t) \right).
\]
The functions $\tilde{\psi}_L$ and
$\tilde{q}_L$ can be considered as
``mollified'' versions of $\psi_L$ and $q$
as $L \searrow 0$.
From the definitions of $\psi_L$ and $\tilde{\psi}_L$,
we have
\begin{equation}
\label{equation_psitilde>}
\tilde{\psi}_L(t)
 =
e^{L \cdot g_L^{-1}(t)}
f\left( \cos{t} \right)
 >
f\left( \cos{t} \right)
\quad
\text{ for }
t > 0.
\end{equation}

Note that
$\hat{g}^{-1}\left( -3\pi/2 \right) = 1 + \sqrt{2}$
by \eqref{equation_ghat},
and
$g_L^{-1}\left( 3\pi/2 \right)$ is strictly increasing
to $\hat{g}^{-1}\left( -3\pi/2 \right)
= 1 + \sqrt{2}$
as $L$ goes down to $0$ by Lemma~\ref{lemma_g-1}.
It follows that,
for every sufficiently small $L > 0$,
we have
$g_L^{-1}(t) > 1$ for $3\pi/2 < t < 2\pi$.
Since
$q$ is strictly increasing on $(1,\infty)$ by Lemma~\ref{lemma_q},
we have
\begin{equation}
\label{equation_qtilde<}
\begin{split}
\tilde{q}_L(t)
 =
q\left( g_L^{-1}(t) \right)
 <
q\left( \hat{g}^{-1}(-t) \right)
 &\quad\text{ for }
3\pi/2 < t < 2\pi \\
 &
\text{ for every sufficiently small }
L > 0
\end{split}
\end{equation}
by Lemma~\ref{lemma_g-1}.

\begin{lemma}
\label{lemma_tildepsi>tildeq}
For every sufficiently small $L > 0$,
$\tilde{\psi}_L(t)
 >
\tilde{q}_L(t)$
for $3\pi/2 < t < 2\pi$.
\end{lemma}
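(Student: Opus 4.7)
The plan is to sandwich $\tilde{\psi}_L(t)$ and $\tilde{q}_L(t)$ between two $L$-free quantities, reducing the lemma to a purely algebraic inequality. The ingredients are already at hand: \eqref{equation_psitilde>} gives $\tilde{\psi}_L(t) > f(\cos t)$ for every $L > 0$, and \eqref{equation_qtilde<} gives $\tilde{q}_L(t) < q\bigl(\hat{g}^{-1}(-t)\bigr)$ on $(3\pi/2, 2\pi)$ for sufficiently small $L$. Chaining these, it will suffice to prove the $L$-independent inequality
\[
f(\cos t)
 \geq
q\bigl( \hat{g}^{-1}(-t) \bigr)
\qquad
\text{for }
3\pi/2 < t < 2\pi.
\]

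First I would change variables via $s = \hat{g}^{-1}(-t)$. Since $\hat{g}$ is a strictly decreasing bijection of $[0,\infty)$ onto $(-2\pi, 0]$ with $\hat{g}(1+\sqrt{2}) = -3\pi/2$, the range $t \in (3\pi/2, 2\pi)$ corresponds exactly to $s \in (1+\sqrt{2}, \infty)$, and by parity of cosine the inequality becomes $f\bigl(\cos\hat{g}(s)\bigr) \geq q(s)$. On this range the last branch of \eqref{equation_ghat} applies, so $\hat{g}(s) + 2\pi = \arctan\{4s(s^2-1)/(s^4-6s^2+1)\}$ lies in $(0,\pi/2)$. Using the identity
\[
(s^4 - 6s^2 + 1)^2 + 16 s^2 (s^2 - 1)^2 = (s^2 + 1)^4,
\]
which follows from a direct expansion, I obtain the clean closed form $\cos\hat{g}(s) = (s^4 - 6s^2 + 1)/(s^2 + 1)^2$.

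The next step is the substitution $v = s + 1/s$; on the relevant range $v > 2\sqrt{2}$. The identities $s^2 + 1 = sv$ and $(s \pm 1)^2 = s(v \pm 2)$ convert $q(s)$ into $(v-2)/(v+2)$ and $f\bigl(\cos\hat{g}(s)\bigr)$ into $(v^2 + 8 - 4\sqrt{v^2+4})/v^2$, so after cross-multiplying, the inequality to be proven reduces to $v^2 + 2v + 4 \geq (v+2)\sqrt{v^2+4}$. Both sides are positive, and squaring produces the difference $(v^2+2v+4)^2 - (v+2)^2(v^2+4) = 4v^2$, which is strictly positive for $v > 0$. Chaining the strict inequalities gives $\tilde{\psi}_L(t) > f(\cos t) > q\bigl(\hat{g}^{-1}(-t)\bigr) > \tilde{q}_L(t)$, completing the proof.

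The only mildly delicate step is spotting the rationalizing substitution $v = s + 1/s$, which exploits the $s \leftrightarrow 1/s$ symmetry hidden in the arctan formula for $\hat{g}$ and in $q$. Once that substitution is in place, the problem collapses to the one-line polynomial identity $(v^2+2v+4)^2 - (v+2)^2(v^2+4) = 4v^2$, and no further analytic work is needed.
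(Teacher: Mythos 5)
Your proposal is correct, and you reach the same $L$-free target inequality $f(\cos t) > q\bigl(\hat g^{-1}(-t)\bigr)$ as the paper does, but you then take a genuinely different route through it. The paper parametrizes by $t$: it solves $\tan(2\pi - t) = 4\kappa(\kappa^2-1)/(\kappa^4-6\kappa^2+1)$ using the substitution $x = \kappa - 1/\kappa$, extracts the closed form $\hat g^{-1}(-t) = (\sqrt{1+\cos t}+\sqrt 2)/\sqrt{1-\cos t}$, rationalizes $q$ at that value, and finally reduces the inequality to $(\cos t - 1)^2 > 0$. You instead parametrize by $s = \hat g^{-1}(-t)$: you convert the arctan to a cosine via the identity $(s^4-6s^2+1)^2 + 16s^2(s^2-1)^2 = (s^2+1)^4$ to get the clean form $\cos \hat g(s) = (s^4-6s^2+1)/(s^2+1)^2$, then set $v = s + 1/s$ (the complementary substitution to the paper's $x = \kappa - 1/\kappa$), and reduce both sides to rational functions of $v$, collapsing the problem to the polynomial identity $(v^2+2v+4)^2 - (v+2)^2(v^2+4) = 4v^2 > 0$. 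I checked the algebra: $\cos\hat g(s) = (v^2-8)/v^2$, $f(\cos\hat g(s)) = (v^2+8-4\sqrt{v^2+4})/v^2$, $q(s) = (v-2)/(v+2)$, and $v > 2\sqrt{2}$ on $s > 1+\sqrt 2$, all as you claim. Your route has the advantage that it never introduces $\sqrt{1\pm\cos t}$ and avoids the two-stage solve-for-$\kappa$ step; in exchange, the paper's version lands on a more visually self-evident final inequality. Both are sound.
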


\begin{proof}
By \eqref{equation_psitilde>} and \eqref{equation_qtilde<},
it is enough to show that
$f\left( \cos{t} \right)
 >
q\left( \hat{g}^{-1}(-t) \right)$
for $3\pi/2 < t < 2\pi$.
Suppose $3\pi/2 < t < 2\pi$.
Note that $\kappa := \hat{g}^{-1}(-t) > 1 + \sqrt{2}$ 
by \eqref{equation_ghat}.
So by \eqref{equation_ghat} again,
we have
\[
-t
 =
\hat{g}(\kappa)
 =
-2\pi
+
\arctan
{
\frac
{4\kappa \left( \kappa^2 - 1 \right)}
{\kappa^4 - 6\kappa^2 + 1}
},
\]
and hence
\begin{equation}
\label{equation_tkappa}
\frac
{4\kappa \left( \kappa^2 - 1 \right)}
{\kappa^4 - 6\kappa^2 + 1}
 =
\tan\left( 2\pi - t \right)
 =
-\tan{t}.
\end{equation}
Note that,
for each
$t \in \left( 3\pi/2, 2\pi \right)$,
we have
$-\tan{t} > 0$,
and
$\kappa$ is the unique positive solution
of \eqref{equation_tkappa} such that $\kappa > 1 + \sqrt{2}$.
Transform \eqref{equation_tkappa} to
\[
-\tan{t}
\cdot
\left( \kappa^4 - 6\kappa^2 + 1 \right)
 =
4\kappa \left( \kappa^2 - 1 \right),
\]
and then to
\[
4 \left( \kappa - \frac{1}{\kappa} \right)
 =
-\tan{t}
\cdot
\left(
\kappa^2 - 6 + \frac{1}{\kappa^2}
\right)
 =
-\tan{t}
\cdot
\left\{
\left( \kappa - \frac{1}{\kappa} \right)^2
-
4
\right\}.
\]
Putting 
\begin{equation}
\label{equation_xkappa}
x = \kappa - \frac{1}{\kappa},
\end{equation}
we have
$4x = -\tan{t} \cdot \left( x^2 - 4 \right)$,
and hence
$\tan{t} \cdot x^2 + 4x - 4\tan{t} = 0$,
which gives
\[
x
 =
\frac{-2 \pm \sqrt{4 + 4\tan^2{t}}}{\tan{t}}
 =
\frac{-2 \cos{t} \pm 2}{\sin{t}}.
\]
Note that $\sin{t} < 0$ for $3\pi/2 < t < 2\pi$.
Since $\kappa > 1$,
we have $x > 0$ by \eqref{equation_xkappa},
and hence
\begin{equation}
\label{equation_x}
x
 =
\frac{-2 \cos{t} - 2}{\sin{t}}
 =
\frac{-2 \left( 1 + \cos{t} \right)}{\sin{t}}.
\end{equation}
Substituting \eqref{equation_x} into \eqref{equation_xkappa} again,
we have
\begin{equation}
\label{equation_kappa}
\sin{t} \cdot \kappa^2 
+
2 \left( 1 + \cos{t} \right) \kappa 
- 
\sin{t}
 =
0.
\end{equation}
Solving \eqref{equation_kappa} for $\kappa$,
we have
\begin{align*}
\kappa
 &=
\frac
{
- 
\left( 1 + \cos{t} \right)
\pm
\sqrt
{
\left( 1 + \cos{t} \right)^2
+
\sin^2{t}
}
}
{\sin{t}}
 =
\frac
{
- 
\left( 1 + \cos{t} \right)
\pm
\sqrt{2}
\sqrt
{
1 + \cos{t}
}
}
{\sin{t}}
\end{align*}
Since $\kappa > 0$ and $\sin{t} < 0$,
we finally have
\[
\hat{g}^{-1}(-t)
 =
\kappa
 =
\frac
{
- 
\left( 1 + \cos{t} \right)
-
\sqrt{2}
\sqrt
{
1 + \cos{t}
}
}
{\sin{t}}
 =
\frac
{
\sqrt{1 + \cos{t}}
+
\sqrt{2}
}
{
\sqrt{1 - \cos{t}}
},
\]
and thus by \eqref{equation_q},
\begin{align*}
\lefteqn{
q\left( \hat{g}^{-1}(-t) \right)
} \\
 &=
\left\{
\frac
{
\frac
{
\sqrt{1 + \cos{t}}
+
\sqrt{2}
}
{
\sqrt{1 - \cos{t}}
}
-
1
}
{
\frac
{
\sqrt{1 + \cos{t}}
+
\sqrt{2}
}
{
\sqrt{1 - \cos{t}}
}
+1
}
\right\}^2
 =
\left\{
\frac
{
\sqrt{1 + \cos{t}}
+
\sqrt{2}
-
\sqrt{1 - \cos{t}}
}
{
\sqrt{1 + \cos{t}}
+
\sqrt{2}
+
\sqrt{1 - \cos{t}}
}
\right\}^2 \\
 &=
\left\{
\frac
{
\sqrt{1 + \cos{t}}
+
\sqrt{2}
-
\sqrt{1 - \cos{t}}
}
{
\sqrt{1 + \cos{t}}
+
\sqrt{2}
+
\sqrt{1 - \cos{t}}
}
\cdot
\frac
{
\sqrt{1 + \cos{t}}
+
\sqrt{2}
-
\sqrt{1 - \cos{t}}
}
{
\sqrt{1 + \cos{t}}
+
\sqrt{2}
-
\sqrt{1 - \cos{t}}
}
\right\}^2 \\
 &=
\frac
{1}
{
\left\{
\left( 1 + \cos{t} \right)
+
2\sqrt{2}
\sqrt{1 + \cos{t}}
+
2
-
\left( 1 - \cos{t} \right)
\right\}^2
}
\cdot \\
 &
\qquad
\cdot
\left\{
\left( 1 + \cos{t} \right)
+
\left( 1 - \cos{t} \right)
+
2
+
2\sqrt{2}
\sqrt{1 + \cos{t}}
\right. \\
 &
\qquad\qquad
\left.
-
2\sqrt{2}
\sqrt{1 - \cos{t}}
-
2
\sqrt{1 - \cos{t}}
\sqrt{1 + \cos{t}}
\right\}^2 \\
 &=
\left\{
\frac
{
2\sqrt{2}
\left( \sqrt{1 + \cos{t}} + \sqrt{2} \right)
-
2
\sqrt{1 - \cos{t}}
\left( \sqrt{1 + \cos{t}} + \sqrt{2} \right)
}
{
2\sqrt{1 + \cos{t}}
\left( \sqrt{1 + \cos{t}} + \sqrt{2} \right)
}
\right\}^2 \\
 &=
\left\{
\frac
{
\sqrt{2}
-
\sqrt{1 - \cos{t}}
}
{
\sqrt{1 + \cos{t}}
}
\right\}^2
 =
\frac
{
3 - \cos{t} - 2\sqrt{2} \sqrt{1 - \cos{t}}
}
{1 + \cos{t}}.
\end{align*}
By \eqref{equation_f},
it remains to show that
\[
2 - \cos{t} - \sqrt{\left( 2 - \cos{t} \right)^2 - 1}
 >
\frac
{
3 - \cos{t} - 2\sqrt{2} \sqrt{1 - \cos{t}}
}
{1 + \cos{t}}
\]
for $3\pi/2 < t < 2\pi$,
which is done by
the following series of equivalent transformations:
\[
-\cos^2{t} + \cos{t} + 2
-
\left( 1 + \cos{t} \right)
\sqrt{\left( 2 - \cos{t} \right)^2 - 1}
 >
3 - \cos{t} - 2\sqrt{2} \sqrt{1 - \cos{t}},
\]
\[
\left( 1 - \cos{t} \right)^2
+
\left( 1 + \cos{t} \right)
\sqrt{\left( 1 - \cos{t} \right) \left( 3 - \cos{t} \right)}
 <
2\sqrt{2} \sqrt{1 - \cos{t}},
\]
\[
\sqrt{1 - \cos{t}}^3
+
\left( 1 + \cos{t} \right)
\sqrt{3 - \cos{t}}
 <
2\sqrt{2},
\]
\[
\left( 1 - \cos{t} \right)^3
 <
8
+
\left( 1 + \cos{t} \right)^2
\left( 3 - \cos{t} \right)
-
4\sqrt{2}
\left( 1 + \cos{t} \right)
\sqrt{3 - \cos{t}},
\]
\[
2\cos^2{t} - 8\cos{t} - 10
 <
-4\sqrt{2}
\left( 1 + \cos{t} \right)
\sqrt{3 - \cos{t}},
\]
\[
\left( 1 + \cos{t} \right)
\left( 5 - \cos{t} \right)
 >
2\sqrt{2}
\left( 1 + \cos{t} \right)
\sqrt{3 - \cos{t}},
\]
\[
\cos^2{t} - 10\cos{t} + 25
 >
8
\left( 3 - \cos{t} \right),
\]
\[
\cos^2{t} - 2\cos{t} + 1
 >
0,
\]
where we used \eqref{equation_3-cos} for the second inequality.
\end{proof}

We now have all the ingredients needed to prove
\eqref{equation_psi>q},
which implies
Theorem~\ref{theorem_main}.

\begin{proof}[Proof of Theorem~\ref{theorem_main}]
By Proposition~\ref{proposition_characteristic},
it is sufficient to show \eqref{equation_psi>q}.
Suppose \eqref{equation_psi>q} is false,
so that
the equation $\psi_{L_0}(\kappa) \leq q(\kappa)$
has a positive solution for some $L_0 > 0$.
Then by Lemma~\ref{lemma_nolowerbound},
there exists $\kappa_L$
satisfying
$\psi_L\left( \kappa_L \right) \leq q\left( \kappa_L \right)$
and
$\left. \psi_L \!\right.^\prime\!\left( \kappa_L \right)
 = q^\prime\left( \kappa_L \right)$
for $0 < L < L_0$.
Let $t_N := g_L\left( \kappa_L \right)$
for $0 < L < L_0$.
By Lemma~\ref{lemma_Lkappa},
we have $3\pi/2 < t_L < 2\pi$
for every sufficiently small $L > 0$.
So by Lemma~\ref{lemma_tildepsi>tildeq},
we have
$\tilde{\psi}_L\left( t_L \right)
 >
\tilde{q}_L\left( t_L \right)$,
and hence
\[
\psi_L\left( \kappa_L \right)
 =
\psi_L\left( g_L^{-1}\left( t_L \right) \right)
 =
\tilde{\psi}_L\left( t_L \right)
 >
\tilde{q}_L\left( t_L \right)
 =
q\left( g_L^{-1}\left( t_L \right) \right)
 =
q\left( \kappa_L \right)
\]
for every sufficiently small $L > 0$.
This is a contradiction to the result that
$\psi_L\left( \kappa_L \right)
 \leq
q\left( \kappa_L \right)$
for $0 < L < L_0$.
Thus we conclude that \eqref{equation_psi>q} is true.
\end{proof}

\end{document}